\documentclass{article}
\usepackage{graphicx} 

\title{Quasiconformal Normalization of Random Meromorphic Functions}
\author{Michael Iofin}
\date{August 2025}

\usepackage{amsmath} 
\usepackage{amsthm}
\usepackage{amssymb}
\usepackage{hyperref} 
\usepackage{color}
\usepackage{tikz-cd}

\usepackage{dsfont}

\usepackage{enumerate}

\usepackage{caption}

\usetikzlibrary{patterns}

\newtheorem{theorem}{Theorem}
\newtheorem{lemma}[theorem]{Lemma}
\newtheorem{proposition}[theorem]{Proposition} 

\theoremstyle{definition}
\newtheorem*{definition}{Definition} 

\theoremstyle{remark}
\newtheorem*{remark}{Remark}

\theoremstyle{conjecture}
\newtheorem{conjecture}{Conjecture}

\DeclareMathOperator{\Mod}{Mod}

\begin{document}

\maketitle

\begin{abstract}
    We study the conformal type of surfaces spread over the sphere via random quasiconformal maps. Constructing a random Beltrami coefficient on the complex plane, we obtain a locally quasiconformal homeomorphism with prescribed dilatation that is almost surely surjective and, with high probability, approximately linear. This yields a normalization for random meromorphic functions associated to surfaces spread over the sphere, from which we prove that the surfaces are almost surely parabolic and obtain bounds on the growth order of their Nevanlinna characteristic.
\end{abstract}

\section{Introduction}

The uniformization theorem states that every simply connected Riemann surface is conformally equivalent to the open unit disk $\mathbb{D}$, the plane $\mathbb{C}$, or the Riemann sphere $\overline{\mathbb{C}}$ (see \cite{Conformal Invariants}, Chapter 10). Such surfaces are called \emph{hyperbolic}, \emph{parabolic}, or \emph{elliptic}, respectively. An important classical problem in the study of Riemann surfaces is the type problem, which asks to identify the conformal type of a simply connected Riemann surface.

A \emph{surface spread over the sphere} is a pair $(X, p)$, where $X$ is a topological surface and $p: X \to \overline{\mathbb{C}}$ is a continuous, open, and discrete map from $X$ to the Riemann sphere \cite{eremenko}. According to Stoïlov's theorem \cite{stoilov}, there exists a unique conformal structure on $X$ such that the map $p$ is holomorphic. If $X$ is simply connected and $\phi$ is a uniformizing map from either $\mathbb{C}$, $\overline{\mathbb{C}}$, or $\mathbb{D}$ to $X$, then $f=p \circ \phi$ is a meromorphic function, and $X$ is called the Riemann surface of $f^{-1}$. Surfaces spread over the sphere are a natural way to construct Riemann surfaces and work with the inverses of non-injective meromorphic functions. They were initially used to study inverses of polynomial functions, and work on surfaces spread over the sphere has since been generalized to potentially infinite-sheeted surfaces.

Let $s(r)$ be a nondecreasing nonnegative function defined on $[0, \infty)$. The order $\lambda$ and lower order $\underline{\lambda}$ of $s(r)$, respectively, are defined by the formulas
$$\lambda = \limsup_{r \to \infty} \frac{\log s(r)}{\log r}, \quad \quad \underline{\lambda} = \liminf_{r \to \infty} \frac{\log s(r)}{\log r}.$$
The order of $s(r)$ is the infimum of all constants $k$ such that $s(r)=O(r^k)$. A meromorphic function $f$ associated to a surface spread over the sphere has a Nevanlinna characteristic $T_f(r)$. The Nevanlinna characteristic is of interest because it controls the distribution of the roots of $f(z)=a$ for any given $a \in \overline{\mathbb{C}}$. For example, the Picard-Borel theorem~\cite{nevanlinna} states that the order of $T_f(r)$ is the same as the order of $n_f(r, a)$ for all $a$ with at most 2 exceptions; here $n_f(r, a)$ denotes the number of roots of $f(z)=a$ in the disk $\{|z| \leq r\}$. Nevanlinna theory can be used to prove and vastly generalize the classical Little Picard theorem, which states that a nonconstant meromorphic function can omit at most two points of $\overline{\mathbb{C}}$. We define the order and lower order of $f$ as the order and lower order, respectively, of $T_f(r)$. An important problem in the theory of value distribution of meromorphic functions is to determine the order and lower order of a meromorphic function.

We consider a model for constructing Riemann surfaces. We color the cells of an infinite grid of unit squares black and white in a checkerboard pattern. Associate a copy of the upper hemisphere $\{ z \ | \ \textnormal{Im}(z)>0\}$ of the Riemann sphere to each white square and a copy of the lower hemisphere to each black square. Furthermore, to each vertex of the grid associate a point on the boundary $\mathbb{R} \cup \{ \infty \}$ between the two hemispheres. We require that if $v_1, v_2, v_3, v_4$ are the vertices of a cell in the grid in clockwise order, then the boundary points associated to these vertices are also in cyclic order (clockwise or counterclockwise). Each hemisphere has four associated points on its boundary, which we will call \emph{marked boundary points}. Given any two neighboring cells $c$ and $c'$, we glue the hemispheres associated to $c$ and $c'$ by identifying the boundary arcs between the marked points associated to the two vertices that $c$ and $c'$ share.

Let $\mathcal{R}$ be the surface formed by gluing together all the hemispheres associated to the cells of the grid. Note that $\mathcal{R}$ is a surface spread over the sphere, where the projection map $p: \mathcal{R} \to \overline{\mathbb{C}}$ takes each component hemisphere of $\mathcal{R}$ to the corresponding hemisphere on the Riemann sphere $\overline{\mathbb{C}}$. The projection $p$ makes $\mathcal{R}$ into a Riemann surface homeomorphic to the plane.

The preimage $p^{-1}(\mathbb{R} \cup \{\infty\})$ of the extended real line is called the net of $(\mathcal{R}, p)$. Let $\mathcal{S}$ denote the set of surfaces spread over the sphere whose net is homeomorphic to an infinite square grid; the set $\mathcal{S}$ is precisely the collection of surfaces that can be constructed according to the method above. Vinberg \cite{Vinberg} asked the question of what the conformal type of a surface in $\mathcal{S}$ could be. Clearly such a surface can be of parabolic type, as demonstrated by the Riemann surface of $\wp^{-1}$, where $\wp$ is a Weierstrass elliptic function. A paper by Geyer and Merenkov \cite{hyperbolic net} answers Vinberg's question by constructing a hyperbolic surface with a square grid net.

In this paper, we continue the study of Vinberg's question by constructing a probability measure on a subset of $\mathcal{S}$ and showing that almost all surfaces in it are parabolic. Thus the hyperbolic surface is an edge case. Our main tool is the use of randomly constructed quasiconformal mappings, which normalize the meromorphic functions associated to random surfaces in $\mathcal{S}$. Quasiconformal mappings are a generalization of conformal maps that retain many properties of conformal maps, and they are useful because of their flexibility. They were studied extensively by Teichmüller, for example in \cite{Teichmüller}. The study of random quasiconformal mappings was carried out from an analytic point of view in~\cite{tao} and from a geometric point of view in~\cite{ivrii markovic}. Building on these developments, we establish results that extend the scope of the existing theory. An original feature of our work is the application of our results about random quasiconformal maps to the study of conformal type of surfaces in $\mathcal{S}$. It is of interest to know what other applications our theorems about random quasiconformal mappings may have.

Surfaces in $\mathcal{S}$ may be given a Riemannian manifold structure via the pullback of the spherical metric on $\overline{\mathbb{C}}$. Our model constructs random surfaces with constant positive curvature outside of the countable set of vertices between 4 cells. Surfaces spread over the sphere such as ours are a natural extension of the notion of the Riemann surface of a multi-valued algebraic function. In particular, surfaces spread over the sphere allow for fibers of infinite cardinality. Combinatorially, the random Riemann surfaces in $\mathcal{S}$ have a square lattice structure. Conformal structures arising from random lattices have been studied extensively, for example by Angel and Schramm in \cite{UIPT} and by Gill and Rohde in \cite{Random Maps}. Random lattices have become of interest in the study of statistical mechanics and 2 dimensional quantum gravity, where the discretization is used to convert an integral over an infinite dimensional space into a sum over finitely many variables, as in \cite{quantum gravity}. Previous work has focused on random triangulations, and we extend it to random lattices with quadrilateral cells.

In order to construct random quasiconformal mappings, we let a partition $\mathcal{P} = \{\mathcal{D}_1, \mathcal{D}_2, \mathcal{D}_3, \dots \}$ be a countable collection of Jordan regions $\mathcal{D}_i$ with disjoint interiors whose closures cover $\mathbb{C}$. In this paper, we take all of the Jordan regions $\mathcal{D}_i$ to contain their boundary. In Section~\ref{random differentials}, we will describe a model for choosing a Beltrami coefficient $\mu$ randomly on each region $\mathcal{D}$ of the partition. The quantity $\frac{1+|\mu|}{1-|\mu|}$ is not necessarily uniformly bounded on $\mathbb{C}$, meaning that the measurable Riemann mapping theorem is not applicable; however, we will still be able to construct a locally quasiconformal homeomorphism $w^\mu$ with Beltrami coefficient $\mu$. Even though $w^\mu$ is injective, it is not necessarily surjective. For technical reasons, we restrict our attention to \emph{probabilistically bounded} differentials $\mu$ on partitions $\mathcal{P}$ of \emph{bounded geometry}; these will be defined in Section~\ref{random differentials}. We then obtain our first main theorem.

\begin{theorem}\label{almost sure surjectivity}
    If $\mu$ is a probabilistically bounded Beltrami differential on a partition $\mathcal{P}$ with bounded geometry, then the map $w^\mu$ is surjective onto $\mathbb{C}$ with probability 1.
\end{theorem}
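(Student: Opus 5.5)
The plan is to verify Lehto's criterion for surjectivity: a locally quasiconformal homeomorphism $w^\mu:\mathbb{C}\to\mathbb{C}$ is onto $\mathbb{C}$, rather than onto a disk, precisely when the image of $\mathbb{C}$ is parabolic. By the modulus characterization, this holds when there are disjoint nested annuli $A_1, A_2, \dots$ separating $0$ from $\infty$ with $\sum_k \Mod\bigl(w^\mu(A_k)\bigr)=\infty$. For a $K$-quasiconformal map on a single annulus we have $\Mod(w^\mu(A)) \ge \Mod(A)/K$. More precisely, we will use the weighted bound $\Mod(w^\mu(A)) \ge \bigl(\int_A \rho^2 K_\mu\bigr)^{-1}$ for an admissible metric $\rho$ on $A$, with $K_\mu = \frac{1+|\mu|}{1-|\mu|}$. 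This form only needs local integrability of $K_\mu$ and is insensitive to the failure of a uniform bound. It therefore suffices to show that almost surely $\sum_k \bigl(\int_{A_k}\rho_k^2 K_\mu\bigr)^{-1}=\infty$.

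First, I will fix a deterministic family of round annuli $A_k=\{2^k<|z|<2^{k+1}\}$, perhaps thickened to unions of partition regions. With the metric $\rho_k = \frac{1}{2\pi |z|}$ on $A_k$, the goal is a lower bound on $\Mod(w^\mu(A_k))$ by a constant times $\bigl(\frac{1}{|A_k|}\int_{A_k}K_\mu\bigr)^{-1}$, up to the factor $\log 2$. Bounded geometry of $\mathcal{P}$ gives two things. Each $A_k$ meets comparably many regions, of order $4^k$ relative to the region scale. Each region is itself comparable to a disk of bounded size. Hence the average of $K_\mu$ over $A_k$ is a weighted average of the independent random values $K_\mu|_{\mathcal{D}}$, with weights bounded above and below.

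Second comes the probabilistic input. I expect probabilistic boundedness to mean that the values $\mu|_{\mathcal{D}}$ are independent and that $K_\mu$ has uniformly controlled tails, for instance $\mathbb{P}(K_\mu|_{\mathcal{D}} > t)\le \epsilon(t)$ with $\epsilon(t)\to 0$ uniformly in $\mathcal{D}$. It is not enough to control expectations, since the mean could be infinite. Instead, I will truncate. Choose $M$ with $\epsilon(M)$ small and replace $K_\mu$ by the quantity ``$K_\mu$ on regions where $K_\mu\le M$''. The bad regions, where $K_\mu>M$, then have to be handled differently.

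Here is the main obstacle: $K_\mu$ may be enormous on a few cells, so the plain average bound fails. The approach I would try first is to modify $\rho_k$ to vanish on bad cells and compensate on good ones, by working with curves in a good sub-annulus. Concretely, I will subdivide $A_k$ into many concentric thin rings of width comparable to the cell size. A ring is ``good'' if a circuit around it passes only through cells with $K_\mu\le M$. By independence and a percolation-type argument, the probability that a good circuit exists in a given ring tends to $1$ as $\epsilon(M)\to 0$. This step uses bounded geometry, to compare with site percolation on a bounded-degree planar graph.

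Failing a clean percolation argument, I will settle for something weaker. It suffices that with probability at least $p>0$, independently over a sparse subsequence of $k$, the annulus $A_k$ contains a sub-annulus of definite modulus on which $K_\mu\le M$, away from boundary effects of cells. Then $\Mod(w^\mu(A_k))\ge c/M$ for those $k$. By Borel--Cantelli (second form, using independence across disjoint, well-separated annuli) this happens for infinitely many $k$ almost surely. Hence $\sum_k \Mod(w^\mu(A_k))=\infty$ almost surely, so $w^\mu(\mathbb{C})$ is not a disk and, being simply connected and open in $\mathbb{C}$, equals $\mathbb{C}$.
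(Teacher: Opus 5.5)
You correctly reduce surjectivity to showing $\sum_k \Mod w^\mu(A_k)=\infty$, and the weighted inequality $\Mod w^\mu(A)\ge(\int_A K_\mu\rho^2)^{-1}$ is a legitimate tool. Note, though, that $\rho$ must be admissible for the curves \emph{crossing} $A_k$, e.g.\ $\rho=1/(|z|\log 2)$; your $1/(2\pi|z|)$ is normalized for the circles.

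The gap is the step meant to produce modulus despite the cells where $K_\mu>M$. Fix $M$, as you must for $\sum_k c/M$ to diverge when the tails $\epsilon(M)$ decay slowly. Take the case that makes the theorem nontrivial: unit squares with i.i.d.\ unbounded $K_\mu$, so bad cells have some density $\epsilon>0$.

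Your primary claim, that a good circuit exists with probability tending to $1$, holds for a fixed ring but not uniformly in $k$. A ring of width comparable to the cell size and circumference $\sim 2^k$ contains $\sim 2^k$ disjoint cross-sections, each entirely bad with probability bounded below. So the probability of an all-good circuit tends to $0$ as $k\to\infty$. Even when one exists, a chain of cells has modulus only $O(2^{-k})$, so a single circuit per shell buys nothing.

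The fallback fails outright. A sub-annulus $B\subset A_k$ that separates the boundary circles and lies in $\{K_\mu\le M\}$ must put every bad cell of $A_k$ into one of its two complementary components. With probability $1-O(4^{-k})$, every point of $A_k$ is within $s=O(\sqrt{k/\epsilon})$ of a bad cell. Hence the two boundary components of $B$ come within $O(s)$ of each other along a separating curve of length $\gtrsim 2^k$, which forces $\Mod B=O(s\log s/2^k)\to 0$. Your event therefore has summable probability and almost surely occurs only finitely often; the second Borel--Cantelli lemma cannot help.

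The missing idea is that you need no region free of bad cells. What you need is that every curve crossing $A_k$ spends a fixed fraction of its Euclidean length in good cells. This is the paper's chemical-distance estimate (Lemma~\ref{percolation lemma}). If the probability of a bad cell is below a threshold $r$ depending only on $\mathcal{P}$, which probabilistic boundedness lets you reach by taking $M$ large, then with probability at least $1-N^{-2}$ all $x,y\in B(0,N)$ with $d(x,y)\ge\log N$ satisfy $d_\textnormal{chem}(x,y)\ge\frac1{10}d(x,y)$. The paper proves it by a Peierls-type count over chains of well-separated ``insular'' regions.

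With this lemma, your own instinct of letting $\rho$ vanish on bad cells closes the argument. Take $\rho=\frac{10}{2^k}\chi_{A_k\cap\{K_\mu\le M\}}$. It is admissible for the crossing curves, and $\int K_\mu\rho^2\le 300\pi M$. So $\Mod w^\mu(A_k)\ge 1/(300\pi M)$ outside an event of probability $O(4^{-k})$. These probabilities are summable, so the first Borel--Cantelli lemma gives the bound for all but finitely many $k$, with no independence across annuli needed. The paper packages the same estimate as rough quasiconformality on rectangles with sides at least $\log N$, and covers each square annulus by four such rectangles.
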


For $p \in \mathbb{C}$, let $B(p, R)$ denote the open disk of radius $R$ centered at $p$. In Section~\ref{random differentials} we also define a condition on $\mathcal{P}$ and $\mu$ known as \emph{periodicity}. Our second main theorem states that if we additionally have periodicity, then $w^\mu$ is close to a linear map with high probability on any disk large enough.

\begin{theorem}\label{approximate linearity}
    Let $\mu$ be periodic and $\mathcal{P}$ be a periodic partition with bounded geometry. Then there exists a linear map $A_\mu$ such that for any $\epsilon>0$, there exists a constant $R_\epsilon$ such that if $R\geq R_\epsilon$ is large enough, then $|A_\mu - w^\mu|<\epsilon R$ on $B(0, R)$ with probability $1-\epsilon$. Here $A_\mu$ depends only on the probability distribution according to which $\mu$ is chosen.
\end{theorem}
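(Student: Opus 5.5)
The plan is a stochastic homogenization argument: rescale the random map to the unit disk and show that the rescaled maps converge in probability, uniformly on compacts, to a deterministic linear map. For $R>0$ set
$$w_R(z) = \frac{w^\mu(Rz)}{R},$$
normalized so that $w^\mu(0)=0$ and, say, $w^\mu(1)$ is fixed or the normalization is taken at infinity as in the construction of Section~\ref{random differentials}. The Beltrami coefficient of $w_R$ is $\mu_R(z)=\mu(Rz)$. This is a random field which, by periodicity of $\mathcal{P}$ and $\mu$, is stationary under the lattice $R^{-1}\Lambda$ and oscillates on scale $1/R$. The goal is to prove that $w_R\to A_\mu$ locally uniformly in probability. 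Then $|w^\mu-A_\mu|<\epsilon R$ on $B(0,R)$ is exactly $|w_R-A_\mu|<\epsilon$ on $\mathbb{D}$.

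\textbf{Step 1 (tightness).} I would show that the family $\{w_R\}$ is precompact in probability. Fix a large $K$. By probabilistic boundedness together with the ergodic theorem, which applies since the cells are i.i.d.\ up to the periodic structure, the fraction of area of $B(0,2R)$ on which $\frac{1+|\mu|}{1-|\mu|}>K$ tends to a deterministic quantity $\delta(K)$, and $\delta(K)\to0$ as $K\to\infty$. I would then use the modulus estimates for locally quasiconformal maps with integrable or exceptionally-large-on-small-sets dilatation, the same ones used to prove Theorem~\ref{almost sure surjectivity}. With these I would get equicontinuity of $w_R$ on $\overline{\mathbb{D}}$ and of its inverse, with probability $1-\epsilon/2$ for $R$ large. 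Theorem~\ref{almost sure surjectivity} guarantees that the inverses are defined on all of $\mathbb{C}$, so limits are homeomorphisms of $\mathbb{C}$ rather than degenerate maps.

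\textbf{Step 2 (identify the limit).} Any subsequential limit $w$ is a homeomorphism solving a limiting Beltrami equation. Standard $G$-convergence/homogenization theory for Beltrami equations (Spagnolo-type results in the plane) says that for periodic or stationary ergodic coefficients, $G$-limits are constant-coefficient. Concretely, I would pass to the real-linear elliptic system for $(\mathrm{Re}\,w,\mathrm{Im}\,w)$, whose coefficient matrix is built from $\mu_R$, and apply the ergodic theorem to the flux via a corrector/div-curl argument. The homogenized matrix is deterministic and depends only on the law of $\mu$. A homeomorphism with constant Beltrami coefficient $\mu_0$ on $\mathbb{C}$ fixing $0$ (and normalized) is the affine map $z\mapsto a(z+\mu_0\bar z)$. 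This linear map is $A_\mu$. Since every subsequential limit is this same $A_\mu$, the whole family converges in probability.

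\textbf{Main obstacle.} The hard step is the homogenization, because the ellipticity is not uniformly bounded: $K=\frac{1+|\mu|}{1-|\mu|}$ is only probabilistically bounded. I would first truncate: replace $\mu$ by $\mu^{(K)}=\mu\mathbf 1_{\{|\mu|\le k\}}$, homogenize the truncated problem to obtain $A^{(K)}$, and show that $A^{(K)}$ converges as $K\to\infty$. I would then control $w^\mu-w^{\mu^{(K)}}$ by a modulus-of-continuity estimate depending on the small density $\delta(K)$ of bad cells. Bounded geometry prevents bad cells from clustering into long chains with high probability, via a percolation-type estimate. With this, the large-dilatation set perturbs the map by $o(R)$ on $B(0,R)$.
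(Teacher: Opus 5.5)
There are two genuine gaps. The first is exactly where the paper takes a different route.

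\textbf{The truncation step.} Your Step 2 ($G$-convergence, correctors, div--curl) needs uniform ellipticity, so the whole proof rests on the truncation in your last paragraph, and that step is not established. Read literally ($o(R)$ for a fixed truncation level), it is false. For fixed $K$ the bad set $\{|\mu|>k\}$ typically has positive density $\delta(K)$ throughout the plane. So $\mu^{(K)}$ has a different law and in general a different homogenized map, $A^{(K)}\neq A_\mu$, and $w^\mu-w^{\mu^{(K)}}$ is of order $|A_\mu-A^{(K)}|\,R$ on $B(0,R)$. For example, if $\mu$ is a fixed coefficient of dilatation $>K$ with probability $p$ and $0$ otherwise, then $\mu^{(K)}\equiv 0$. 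What you actually need is a bound $\sup_{B(0,R)}|w^\mu-w^{\mu^{(K)}}|\le \eta(K)R$, with high probability, uniformly in $R$, with $\eta(K)\to 0$. That says $w^\mu\circ (w^{\mu^{(K)}})^{-1}$, a map with unbounded dilatation on a sparse random set, is asymptotically linear with distortion tending to $1$. This is a homogenization statement of the same difficulty as the theorem, and nothing you cite gives it:
\begin{itemize}
\item the area fraction $\delta(K)$ does not control moduli;
\item $K_\mu$ need not be integrable, since probabilistic boundedness gives no moments;
\item the percolation estimate only gives $d_{\mathrm{chem}}\ge d/10$, hence distortion of moduli by a \emph{fixed} factor $100K'$. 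That yields compactness but by itself cannot show two maps are close.
\end{itemize}
The paper never truncates. Lemmas~\ref{percolation lemma} and~\ref{rough quasiconformality} give $(K,\log N)$-rough quasiconformality of the untruncated $w^\mu$ on $B(0,N)$ with probability $1-N^{-2}$, rescaled in Lemma~\ref{rescaled rough quasiconformality}. This is the only input to Sections~6--7 of Ivrii--Markovi\'c, where the limit is identified without any uniform bound on the dilatation; periodicity enters through their Lemma~6.3.

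\textbf{The normalization.} With $w^\mu(0)=0$ and $w^\mu(1)=1$, your $w_R(z)=w^\mu(Rz)/R$ is normalized only at $0$ and $1/R$, which merge in the limit. So homogenization determines a subsequential limit only up to the complex factor $a$ in $a(z+\mu_0\bar z)$. Your Step 1 also does not give tightness of $w^\mu(R)/R$, since rough quasiconformality controls growth across scales only up to power laws. Worse, $a$ is genuinely random:
\begin{itemize}
\item Resampling $\mu$ on the finitely many cells near $[0,1]$ preserves the law of $\mu$ but replaces $w^\mu$ by $h\circ w^\mu$.
\item Here $h$ fixes $0,1,\infty$ and is conformal off a compact set, so $h(\zeta)=\lambda\zeta+O(1)$ near $\infty$, and the large-scale linear part is multiplied by $\lambda$.
\item A deterministic limit would therefore force $\lambda=1$ almost surely. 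That fails in general: the first variation of $\lambda$ is $\frac{1}{\pi}\int\frac{\nu(w)}{w(w-1)}\,dA(w)$, where $\nu$ is the variation of the Beltrami coefficient of $h$.
\end{itemize}
Hence ``every subsequential limit is the same $A_\mu$'' fails. What homogenization does give is the macroscopically normalized statement $w^\mu(Rz)/w^\mu(R)\to A_\mu(z)$. This is Theorem~\ref{rescaled approximate linearity} with $w^\mu_\delta$ fixing $0,1,\infty$, and it translates back to $|w^\mu-c_RA_\mu|<\epsilon|c_R|R$ on $B(0,R)$ with the random factor $c_R=w^\mu(R)/R$. The paper's own deduction has the same slip. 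It asserts that $I_\delta\circ w^\mu\circ I_R$ has the law of $w^\mu_\delta$, but that map fixes $\delta$ rather than $1$. So under the $0,1,\infty$ normalization neither argument delivers a deterministic $A_\mu$, and one should not be expected in general.
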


The usefulness of Theorems~\ref{almost sure surjectivity} and~\ref{approximate linearity} stems from the natural occurrence of random quasiconformal maps in the modification of random meromorphic functions associated with surfaces in $\mathcal{S}$. In particular, we will consider a probability space $(\mathcal{V}, \eta_\mathcal{V})$, where $\mathcal{V}$ is a subset of $\mathcal{S}$ and $\eta_\mathcal{V}$ is a probability measure on $\mathcal{V}$. The following theorem will then be an easy consequence of Theorem~\ref{almost sure surjectivity}.

\begin{theorem} \label{parabolicity}
    Almost all surfaces in $(\mathcal{V}, \eta_\mathcal{V})$ are parabolic.
\end{theorem}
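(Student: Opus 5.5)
The plan is to realize each surface $\mathcal{R}\in\mathcal{V}$ as a meromorphic function on a domain in $\mathbb{C}$ composed with a random locally quasiconformal map, and then to use Theorem~\ref{almost sure surjectivity} to conclude that this domain is all of $\mathbb{C}$.

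\textbf{Reference structure and projection.} Give $\mathbb{C}$ the partition $\mathcal{P}$ into unit squares, colored in a checkerboard pattern; this partition clearly has bounded geometry. For a surface $(\mathcal{R},p)\in\mathcal{V}$, let $a_v\in\mathbb{R}\cup\{\infty\}$ be the marked boundary point attached to the grid vertex $v$. On each white square $Q$ with vertices $v_1,\dots,v_4$, fix a homeomorphism $g_Q$ from $Q$ onto the closed upper hemisphere, quasiconformal in the interior, sending the vertices to $a_{v_1},\dots,a_{v_4}$ in order. The natural choice is to compose an affine map onto a fixed square, a conformal map of that square onto a quadrilateral, and a Möbius map normalizing the four marked points. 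On black squares, use the same construction with the lower hemisphere. If the homeomorphisms are chosen canonically from the four boundary points, for instance piecewise linearly in a chordal parametrization of the edges, then the maps on two adjacent squares agree on their common edge. They therefore glue to a continuous, open, discrete map $g:\mathbb{C}\to\overline{\mathbb{C}}$ whose net is the square grid. The surface $(\mathbb{C},g)$ is then equivalent to $(\mathcal{R},p)$ by a homeomorphism $h:\mathbb{C}\to\mathcal{R}$ with $p\circ h=g$.

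\textbf{Beltrami coefficient and conformal type.} Let $\mu$ be the Beltrami coefficient of $g$, defined on each square $Q$ as the pullback of the standard structure by $g_Q$. Under $\eta_\mathcal{V}$, $\mu$ is a random Beltrami differential on $\mathcal{P}$. The dilatation of $\mu$ on $Q$ is controlled by the cross-ratio of $(a_{v_1},\dots,a_{v_4})$, so it may be unbounded over $\mathbb{C}$. I expect that the measure $\eta_\mathcal{V}$ was defined, with i.i.d.-type marked points, precisely so that $\mu$ is probabilistically bounded. This verification is one step. Now let $w^\mu$ be the locally quasiconformal homeomorphism with coefficient $\mu$, and set $f=g\circ(w^\mu)^{-1}$ on $\Omega=w^\mu(\mathbb{C})$. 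Since $g$ and $w^\mu$ have the same Beltrami coefficient, $f$ is holomorphic as a map to $\overline{\mathbb{C}}$, hence meromorphic on $\Omega$. The map $h\circ(w^\mu)^{-1}:\Omega\to\mathcal{R}$ is a homeomorphism that is conformal for the Stoïlov structure, by the uniqueness in Stoïlov's theorem. So $\mathcal{R}$ is conformally equivalent to $\Omega$. By Theorem~\ref{almost sure surjectivity}, $\Omega=\mathbb{C}$ almost surely, so $\mathcal{R}$ is parabolic almost surely. The surface cannot be elliptic, since it is noncompact.

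\textbf{Main obstacle.} The delicate step is checking the hypotheses: that the gluing of the $g_Q$ is consistent along edges, and above all that $\mu$ is probabilistically bounded under $\eta_\mathcal{V}$. For the second point I would bound $\|\mu\|_{\infty,Q}$ by an explicit function of the cross-ratio of the four marked points. I would then check that the defining distribution of $\eta_\mathcal{V}$ makes the resulting tail estimates uniform over squares, as the definition of probabilistic boundedness requires. The remaining arguments are formal.
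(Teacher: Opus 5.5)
Your architecture matches the paper's: write $\mathcal{R}$ as $g\circ(w^\mu)^{-1}$, identify $\mathcal{R}$ conformally with $w^\mu(\mathbb{C})$, and invoke Theorem~\ref{almost sure surjectivity}. The gap is in the step ``under $\eta_\mathcal{V}$, $\mu$ is a random Beltrami differential on $\mathcal{P}$,'' which fails for the partition into unit squares. In the model of Section~\ref{random differentials} the parameter is drawn \emph{independently} in each region, and Theorem~\ref{almost sure surjectivity} genuinely uses this. Its proof colors regions yellow or blue independently and applies Lemma~\ref{percolation lemma}, whose counting bound multiplies non-insularity probabilities along associable sequences. With unit squares, $\mu|_Q$ is a function of the four marked points $a_{v_1},\dots,a_{v_4}$. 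Each of these is shared with the other squares at that vertex, so the coefficients on neighbouring squares are dependent, and the theorem does not apply as stated. The step you flagged as the main obstacle, probabilistic boundedness, is by contrast routine. The law of $\mu|_Q$ depends only on the color and label pattern of $Q$, so there are finitely many laws, each with a.s.\ finite dilatation; this is the argument of Lemma~\ref{periodic=>bounded}.

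The missing idea is to build the deformation so that the randomness is localized at vertices. The paper starts from the reference surface $\mathcal{R}_0$, whose marked points are the midpoints $m_i$ of the arcs $C_i$ and which is uniformized by an elliptic function $\wp$. It maps each hemisphere by $(r,\theta)\mapsto(r,k(\theta))$, with $k$ piecewise linear, fixing every $\theta_i$ and sending $m_i$ to $\alpha_i$. On the two sectors $[\theta_i,m_i]$ and $[m_i,\theta_{i+1}]$ the map depends on $\alpha_i$ alone. So if $\mathcal{P}$ consists of the stars of eight triangles around the grid vertices, the coefficient on each region is a function of a single vertex's marked point. The coefficients are then independent across regions, and $\mathcal{P}$ and $\mu$ are periodic.

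The same construction settles the edge-consistency question you left open, because on each boundary arc $k$ depends only on the two endpoint marked points. Your ``natural'' conformal-map choice does not glue, since its boundary values on an edge depend on all four marked points.

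Alternatively, you could keep unit squares and redo Lemma~\ref{percolation lemma} for this finitely dependent coloring. Call a vertex bad if its marked point leaves a fixed compact subarc of $C_i$, and color a square yellow if one of its vertices is bad. Bound non-insularity by a union bound, and take $R>6d$ so that regions meeting the $R/3$-neighbourhoods of two elements of an associable sequence share no vertex. Either way this has to be addressed; as written, your proof invokes Theorem~\ref{almost sure surjectivity} outside its hypotheses.
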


Theorem~\ref{approximate linearity} provides more specific information about the structure of our random quasiconformal maps and thus lets us extract more information about a meromorphic function $f$ defined by a surface in $\mathcal{V}$. In particular, in addition to learning that $f$ is defined on all of $\mathbb{C}$, we are also able to estimate their order of growth using the following theorem.

\begin{theorem}\label{order of growth estimate}
    The meromorphic function $f$ defined by a surface in $(\mathcal{V}, \eta_\mathcal{V})$ almost always has order at least 2 and lower order at most 2.
\end{theorem}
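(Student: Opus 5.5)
We need to prove that $f$ almost surely has order at least $2$ and lower order at most $2$. We would write $f = F\circ (w^\mu)^{-1}$, where $F:\mathbb{C}\to\overline{\mathbb{C}}$ is the "model" locally quasiconformal map. It is obtained by sending each unit square of the grid onto the corresponding hemisphere by a fixed quasiconformal map with prescribed values at the corners; its dilatation on each cell is determined by the random cross-ratio data of the four marked boundary points. The Beltrami coefficient $\mu$ of $F$ is the random differential in the model, constructed to be periodic and probabilistically bounded on the square partition. By Theorem~\ref{almost sure surjectivity}, $w^\mu$ is a homeomorphism of $\mathbb{C}$ almost surely, so $f$ is meromorphic in the plane. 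It then suffices to count $n_f(r,a)$ for a fixed non-exceptional value $a$. For instance, take $a$ with $p^{-1}(a)$ containing exactly one point in each white cell, and use that the order of $T_f$ equals that of $n_f(r,a)$ for all but two $a$. More precisely, we would use the comparison $N(r,a)\le T(r)+O(1)$ together with Nevanlinna's second main theorem, $T(r)\le \sum_{j=1}^3 N(r,a_j)+S(r)$, choosing three generic values $a_j$.

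The key counting step is as follows. $n_f(r,a)$ equals the number of white cells $c$ whose $a$-point $w^\mu(z_c)$ lies in $B(0,r)$, i.e. the number of cells in $(w^\mu)^{-1}(B(0,r))$, up to boundary cells. By Theorem~\ref{approximate linearity}, for each $\epsilon$ and each $R\ge R_\epsilon$ we have, with probability at least $1-\epsilon$, $|w^\mu-A_\mu|<\epsilon R$ on $B(0,R)$. The map $A_\mu$ is linear and nondegenerate; it is a limit of quasiconformal normalized maps, and we would check it is invertible by the same argument that yields it. On this event, $(w^\mu)^{-1}(B(0,r))$ is sandwiched between $A_\mu^{-1}(B(0,r\mp C\epsilon R))$ for $r$ comparable to $R$. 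Hence $n_f(r,a)\asymp r^2$ with constants depending only on $A_\mu$.

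The main obstacle is that Theorem~\ref{approximate linearity} is only a statement "with probability $1-\epsilon$" at each fixed scale, not almost surely for all large $R$. This is exactly why we only get order $\ge2$ and lower order $\le 2$ rather than order exactly $2$. We would proceed as follows. Fix $\epsilon$ and a sequence $R_k\to\infty$, and let $E_k$ be the good event at scale $R_k$. By Fatou's lemma (reverse form), $P(\limsup E_k)\ge\limsup P(E_k)\ge 1-\epsilon$. Letting $\epsilon\to 0$ along a countable sequence, we get that almost surely infinitely many scales $R_k$ are good. At such scales $n_f(R_k,a)\asymp R_k^2$ and consequently $N(R_k,a)$, and hence $T(R_k)$ via the main theorems, lies between $cR_k^2$ and $CR_k^2\log R_k$. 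A good scale also controls $N(r)$ for $r\le R_k$ through monotonicity of $n$, which is what lets us convert counts at the single scale $R_k$ into bounds on the integrated counting function. Along the good sequence, $\log T(R_k)/\log R_k\to 2$. This gives $\limsup\ge 2$ and $\liminf\le 2$, proving the theorem. We must check that the constant $C$ of the event is uniform, which comes from fixing $A_\mu$ independent of the sample; and that the error term $S(r)$ in the second main theorem is handled outside an exceptional set of finite measure, so that the $R_k$ can be chosen to avoid it.
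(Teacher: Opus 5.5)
Your growth argument is correct, but it estimates $T_f$ differently from the paper. The paper never fixes target values. It uses the Ahlfors--Shimizu form $T_f(r)=\int_0^r A(t)\,dt/t+O(1)$, where $A(t)$ is the spherical area of $f(B(0,t))$ counted with multiplicity. On each good window $t\in[0.6R_n,0.9R_n]$ it bounds $A(t)$ between constant multiples of $t^2$ by counting cells, each contributing a hemisphere of area $\frac12$, that lie in or cover $(w^\mu)^{-1}(B(0,t))$; this set is squeezed between two ellipses.

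You instead count $a$-points for non-real $a$ (all simple, one per cell of one colour), using the first main theorem for the lower bound and the second main theorem for the upper bound. That works, but it brings in $S(r)$ and an exceptional set $E$ of finite measure. You cannot choose the $R_k$ to avoid $E$, since they are fixed before the sample while $E$ depends on $f$. The fix is that a good scale controls $n(r,a)$ for all $r$ in a window $[cR_k,R_k]$ whose length tends to infinity, so the window contains some $r_k\notin E$ once $k$ is large. Alternatively, Cartan's identity $T(r,f)=\frac{1}{2\pi}\int_0^{2\pi}N(r,e^{i\theta},f)\,d\theta+O(1)$ removes the second main theorem altogether. Every $e^{i\theta}$ with $\theta\neq0,\pi$ is non-real and obeys the same uniform count, so this is essentially the paper's averaging over the sphere.

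Your reverse-Fatou step followed by $\epsilon\to0$ is a valid substitute for the paper's Borel--Cantelli argument with failure probabilities $\epsilon/2^n$. Your $R_k^2\log R_k$ upper bound is exactly what monotonicity of $n$ below the window gives. The paper's bound $\int_0^{0.9R_n}c_2t\,dt$ tacitly needs $A(t)\le c_2t^2$ below the window as well. The logarithm is harmless for the lower order.

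The step that fails as written is the setup feeding Theorems~\ref{almost sure surjectivity} and~\ref{approximate linearity}. Those results, and the percolation lemma behind them, require $\mu$ to be chosen \emph{independently} on distinct regions of $\mathcal{P}$. On the square partition this fails: the dilatation on a cell depends on the marked points at its four corners, which it shares with its eight neighbouring cells. Prescribing corner values also does not make the cellwise maps agree along common edges, which you need for $F$ to be continuous.

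The paper handles both points. It deforms $\wp_0$ on each hemisphere by the angular maps $(r,\theta)\mapsto(r,k(\theta))$, which fix the rays $\theta=\theta_i$ and send $m_i$ to $\alpha_i$, so adjacent cells agree on shared edges. It also takes $\mathcal{P}$ to be the partition into vertex stars (the eight triangular pieces around each vertex), on each of which $|\mu|$ depends only on that vertex's marked point. With this partition $\mu$ is independent, periodic and probabilistically bounded, and the rest of your argument goes through.
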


In Section~\ref{background}, we discuss relevant background related to extremal length and quasiconformal mappings. In Section~\ref{random differentials}, we define partitions of bounded geometry, probabilistically bounded random Beltrami differentials, and periodic partitions and differentials. In Section~\ref{percolation}, we will prove some properties of percolation in partitions of bounded geometry. In Section~\ref{sect: rough quasiconformality}, we will use our percolation results to prove ``rough quasiconformality" of random quasiconformal maps, which will allow us to prove Theorem~\ref{almost sure surjectivity}. In Section~\ref{rescaling}, we will generalize the results of Ivrii and Marković~\cite{ivrii markovic} to prove Theorem~\ref{approximate linearity}. In Section~\ref{random riemann surfaces} we define the probability space $(\mathcal{V}, \eta_\mathcal{V})$. In Section~\ref{quasiconformal deformation} we demonstrate the power of our new methods by using Theorems~\ref{almost sure surjectivity} and~\ref{approximate linearity} to prove Theorems~\ref{parabolicity} and~\ref{order of growth estimate}, vastly generalizing the solution to Vinberg's problem~\cite{Vinberg}. Lastly, in Section~\ref{further discussion} we discuss potential directions of future research.

\section{Background}\label{background}

We review relevant background related to the theory of extremal length and quasiconformal mappings in order to establish notation and conventions. We follow Ahlfors' texts~\cite{Conformal Invariants} and~\cite{Ahlfors}. Let $\Gamma$ be a family of rectifiable curves in $\mathbb{C}$. Define a metric $\rho: \mathbb{C} \to \mathbb{R}$ to be \emph{allowable} if it is Borel measurable, nonnegative, and $\iint \rho^2 \ dx \, dy$ does not equal $0$ or $\infty$. Given $\gamma \in \Gamma$ and an allowable $\rho$, define $\rho(\gamma)$ according to the formula \[\rho(\gamma) = \int_\gamma \rho \ |dz|.\] We then have $$L(\rho) = \inf_{\gamma \in \Gamma} \rho(\gamma).$$

Define $A(\rho)$ according to the formula $$A(\rho) = \iint \rho^2 \ dx \, dy.$$ We have the following definition.

\begin{definition}
    The \emph{extremal length} of $\Gamma$, denoted $\lambda(\Gamma)$, is defined by $$\lambda(\Gamma)=\sup_{\rho} \frac{L(\rho)^2}{A(\rho)},$$ where the supremum is over all allowable functions $\rho$.
\end{definition}

The following proposition makes extremal length relevant to our study of conformal type.

\begin{proposition}[{\cite[Corollary of Theorem 3 in Chapter 1]{Ahlfors}}]\label{invariance}
    If $\phi$ is conformal in a region containing a family $\Gamma$ of rectifiable curves, and $\Gamma'=\phi(\Gamma)$, then $\lambda(\Gamma) = \lambda(\Gamma')$.
\end{proposition}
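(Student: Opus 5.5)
The plan is to argue through a comparison of metrics. Let $\rho$ be allowable for $\Gamma$, and let $\psi = \phi^{-1}$, defined on the image region $\phi(\Omega)$, where $\Omega$ is the region containing $\Gamma$. I will build a metric for $\Gamma'$ by pulling back, or more precisely pushing forward: set
$$\rho'(w) = \rho(\psi(w))\,|\psi'(w)| \quad \text{for } w \in \phi(\Omega), \qquad \rho'(w)=0 \text{ otherwise}.$$
Before using this I will first arrange that $\rho$ vanishes outside $\Omega$. Replacing $\rho$ by $\rho\,\mathds{1}_\Omega$ leaves $L(\rho)$ unchanged, since every $\gamma\in\Gamma$ lies in $\Omega$. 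It can only decrease $A(\rho)$. So the ratio $L^2/A$ does not decrease, and the supremum may be taken over such $\rho$. There is one degenerate case: if $A(\rho\mathds 1_\Omega)=0$, then $L(\rho)=0$ as well, and such $\rho$ contribute nothing to the supremum.

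Next I will verify the two key identities. Since $\phi$ is conformal, $\psi$ is holomorphic and injective, and $|dw|$ along $\phi(\gamma)$ corresponds to $|\phi'(z)|\,|dz|$. Hence
$$\int_{\phi(\gamma)} \rho'\,|dw| = \int_\gamma \rho(z)\,|\psi'(\phi(z))|\,|\phi'(z)|\,|dz| = \int_\gamma \rho\,|dz|.$$
This also shows that the image curves are rectifiable, and that $L_{\Gamma'}(\rho')=L_\Gamma(\rho)$. The Jacobian of $\psi$ is $|\psi'|^2$, so the change of variables formula gives $A(\rho')=A(\rho)$. Borel measurability of $\rho'$ holds because it is a composition of Borel functions with continuous ones. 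Consequently $\rho'$ is allowable, and $\lambda(\Gamma')\ge L(\rho)^2/A(\rho)$; taking the supremum over $\rho$ gives $\lambda(\Gamma')\ge\lambda(\Gamma)$.

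Finally I will apply the same argument with $\psi$ in place of $\phi$. Here $\psi$ is conformal on $\phi(\Omega)$, which contains $\Gamma'$, and it maps $\Gamma'$ back to $\Gamma$. This gives the reverse inequality, and hence equality.

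The only delicate step is the reduction to metrics supported in $\Omega$, together with the bookkeeping of allowability (the condition that $A$ is neither $0$ nor $\infty$). I handle this by the truncation remark above. Metrics whose truncation has zero area give zero length, so discarding them does not change the supremum.
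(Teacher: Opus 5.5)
Your route is the standard one: push $\rho$ forward by $\rho'=(\rho\circ\psi)\,|\psi'|$, check that $\rho$-lengths and areas are unchanged, and apply the same reasoning to $\psi$. The paper does not prove this statement and only cites Ahlfors, where this metric-transport argument is the proof. Your truncation to $\Omega$, the measurability of $\rho'$, and the two change-of-variables identities are all fine.

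The step that fails is the one you flagged as delicate. It is not true that $A(\rho\,\chi_\Omega)=0$ forces $L(\rho)=0$. A Borel metric can vanish almost everywhere in $\Omega$ and still give every curve of $\Gamma$ positive length, by concentrating on a null set that carries the curves. For example, let $\Omega$ be the unit disk, let $\Gamma$ consist of the single segment $\gamma=[-\frac12,\frac12]$, and let $\rho=\chi_\gamma+\chi_{B(3,1)}$. Then $\rho$ is allowable and $A(\rho\,\chi_\Omega)=0$, yet $L(\rho)=1$. So these metrics cannot be discarded for the reason you give.

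The repair is short. Suppose $A(\rho\,\chi_\Omega)=0$ and $L(\rho)>0$. Choose a disk $D\subset\Omega$ and set $\rho_\epsilon=\rho\,\chi_\Omega+\epsilon\,\chi_D$. This metric is supported in $\Omega$. It satisfies $A(\rho_\epsilon)=\epsilon^2|D|\in(0,\infty)$, since $\rho\,\chi_\Omega=0$ a.e. It also satisfies $L(\rho_\epsilon)\ge L(\rho)$, since every curve of $\Gamma$ lies in $\Omega$. Hence $L(\rho_\epsilon)^2/A(\rho_\epsilon)\to\infty$ as $\epsilon\to0$. So in this case the supremum over truncated metrics of positive area is already $\infty=\lambda(\Gamma)$, and pushing $\rho_\epsilon$ forward gives $\lambda(\Gamma')=\infty$ as well. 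With this case handled, discarding the degenerate metrics is legitimate and the rest of your proof goes through unchanged.

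A smaller point: rectifiability of $\phi(\gamma)$ does not follow from the $\rho$-length identity itself. It follows from $|dw|=|\phi'|\,|dz|$ together with the boundedness of $|\phi'|$ on the compact trace of $\gamma$ in $\Omega$.
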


We will also need the proposition below, known as the comparison principle.

\begin{proposition}[{\cite[Theorem 4-1]{Conformal Invariants}}]\label{comparison}
    Let $\Gamma$ and $\Gamma'$ be two curve families such that every curve $\gamma \in \Gamma$ contains some $\gamma' \in \Gamma'$. In that case, $\lambda(\Gamma) \geq \lambda(\Gamma')$.
\end{proposition}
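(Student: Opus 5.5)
The comparison principle (Proposition~\ref{comparison}) follows directly from the definition of extremal length. I will show that for every allowable metric $\rho$ we have $L_\Gamma(\rho) \geq L_{\Gamma'}(\rho)$, where the subscript records which family the infimum is taken over. The area $A(\rho)$ does not depend on the curve family, so this inequality passes to the quotients $L(\rho)^2/A(\rho)$ and then to their suprema.

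First, fix an allowable $\rho$ and a curve $\gamma \in \Gamma$. By hypothesis, $\gamma$ contains some $\gamma' \in \Gamma'$, meaning $\gamma'$ is a sub-arc (or sub-collection of arcs) of $\gamma$. Since $\rho \geq 0$, the line integral is monotone under inclusion of curves:
\[
\rho(\gamma) = \int_\gamma \rho \, |dz| \;\geq\; \int_{\gamma'} \rho \, |dz| = \rho(\gamma') \;\geq\; \inf_{\tilde\gamma \in \Gamma'} \rho(\tilde\gamma) = L_{\Gamma'}(\rho).
\]
Taking the infimum over $\gamma \in \Gamma$ gives $L_\Gamma(\rho) \geq L_{\Gamma'}(\rho) \geq 0$.

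Second, squaring preserves this inequality because both sides are nonnegative, and dividing by $A(\rho) \in (0,\infty)$ gives $L_\Gamma(\rho)^2/A(\rho) \geq L_{\Gamma'}(\rho)^2/A(\rho)$. The left side is at most $\lambda(\Gamma)$, so $\lambda(\Gamma) \geq L_{\Gamma'}(\rho)^2/A(\rho)$ for every allowable $\rho$. Taking the supremum over $\rho$ yields $\lambda(\Gamma) \geq \lambda(\Gamma')$.

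There is no substantive obstacle here. The only point needing care is measure-theoretic: one should check that $\int_{\gamma'} \rho\,|dz| \leq \int_\gamma \rho\,|dz|$ is meaningful when $\rho$ is merely Borel. This holds because both integrals are taken with respect to arc length on rectifiable curves, Borel functions restrict to Borel functions along them, and nonnegativity gives monotonicity. Infinite values cause no problem, since the inequality $L_\Gamma(\rho) \ge L_{\Gamma'}(\rho)$ holds in $[0,\infty]$.
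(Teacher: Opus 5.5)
Your proposal is correct and is the standard argument: for each allowable $\rho$, monotonicity of $\int \rho\,|dz|$ under inclusion of curves gives $L_\Gamma(\rho) \ge L_{\Gamma'}(\rho)$, and taking the supremum over $\rho$ yields the inequality. The paper gives no proof of its own and simply cites Ahlfors' Theorem 4-1, whose proof is this same one-line observation.
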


Lastly, the following proposition, known as the composition law, allows us to further compare the extremal lengths of various curve families.

\begin{proposition}[{\cite[Theorem 4-2]{Conformal Invariants}}]\label{composition}
    Let $\Omega_1, \Omega_2$ be disjoint open sets and $\Gamma_1, \Gamma_2$ be families of curves in $\Omega_1, \Omega_2$, respectively. If $\Gamma$ is a third curve family such that each $\gamma \in \Gamma$ contains a $\gamma_1 \in \Gamma_1$ and a $\gamma_2 \in \Gamma_2$, then $\lambda(\Gamma) \geq \lambda(\Gamma_1)+\lambda(\Gamma_2).$
\end{proposition}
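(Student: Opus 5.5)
This is the composition law for extremal length, and I would prove it directly from the definition by gluing together near-extremal metrics for $\Gamma_1$ and $\Gamma_2$. We may assume both $\lambda(\Gamma_1)$ and $\lambda(\Gamma_2)$ are positive and finite; otherwise the inequality is trivial or follows by a limiting version of the same argument. Fix allowable metrics $\rho_1$ for $\Gamma_1$ and $\rho_2$ for $\Gamma_2$. Since each $\Gamma_i$ lives in $\Omega_i$, replacing $\rho_i$ by $\rho_i \mathds{1}_{\Omega_i}$ does not change $L_{\Gamma_i}(\rho_i)$ and does not increase $A(\rho_i)$. So we may assume $\rho_i$ vanishes off $\Omega_i$. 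Normalize so that $L_{\Gamma_i}(\rho_i) = A(\rho_i) =: a_i$, which is achieved by multiplying $\rho_i$ by the constant $A(\rho_i)/L_{\Gamma_i}(\rho_i)$. Then $L_{\Gamma_i}(\rho_i)^2/A(\rho_i) = a_i$.

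Next define $\rho = \rho_1 + \rho_2$. Because $\Omega_1$ and $\Omega_2$ are disjoint, the supports of $\rho_1$ and $\rho_2$ do not overlap. Hence $\rho$ is Borel measurable and nonnegative, and $A(\rho) = a_1 + a_2$, so $\rho$ is allowable. Take any $\gamma \in \Gamma$. By hypothesis it contains some $\gamma_1 \in \Gamma_1$ and some $\gamma_2 \in \Gamma_2$. These subarcs lie in the disjoint sets $\Omega_1$ and $\Omega_2$, so they are disjoint pieces of $\gamma$. Since $\rho \geq 0$,
\[
\int_\gamma \rho\,|dz| \;\geq\; \int_{\gamma_1}\rho_1\,|dz| + \int_{\gamma_2}\rho_2\,|dz| \;\geq\; a_1 + a_2 .
\]
Taking the infimum over $\gamma$ gives $L_\Gamma(\rho) \geq a_1 + a_2$. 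Therefore
\[
\lambda(\Gamma) \;\geq\; \frac{L_\Gamma(\rho)^2}{A(\rho)} \;\geq\; a_1 + a_2 .
\]

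Finally, take the supremum over $\rho_1$ and $\rho_2$. The normalization does not change the ratios $L_{\Gamma_i}(\rho_i)^2/A(\rho_i)$, so $a_1$ and $a_2$ can be made arbitrarily close to $\lambda(\Gamma_1)$ and $\lambda(\Gamma_2)$ respectively. This yields $\lambda(\Gamma) \geq \lambda(\Gamma_1) + \lambda(\Gamma_2)$.

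The step needing the most care is the line-integral inequality. It uses disjointness twice. First, the two subarcs must not overlap, so that their contributions to $\int_\gamma \rho\,|dz|$ add rather than being counted once. Second, on $\gamma_i$ the metric $\rho$ must equal $\rho_i$. Both facts come from $\Omega_1 \cap \Omega_2 = \emptyset$ together with the truncation of each $\rho_i$ to its own $\Omega_i$.
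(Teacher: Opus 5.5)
Your argument is correct and is essentially the standard proof of Ahlfors' Theorem 4-2, which the paper cites rather than reproves: you truncate near-extremal metrics to their own $\Omega_i$, normalize so that $L=A$, and add them, using disjointness of the $\Omega_i$ for both the area and the line integrals. One small slip: to get $L_{\Gamma_i}(\rho_i)=A(\rho_i)$ you must multiply $\rho_i$ by $L_{\Gamma_i}(\rho_i)/A(\rho_i)$, not by its reciprocal; this scaling gives $a_i = L_{\Gamma_i}(\rho_i)^2/A(\rho_i)$, which is the value you use afterwards.
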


In this paper, we take a quadrilateral to mean a Jordan region $Q$ with four vertices on its boundary dividing the boundary into four arcs, referred to as sides. Furthermore, one pair of opposite sides is marked, so that it is possible to consider the family $\Gamma$ of curves in $Q$ connecting the opposite marked sides. We define the modulus of $Q$ by the formula
$$\Mod Q =  \lambda(\Gamma)^{-1}.$$
An orientation preserving homeomorphism $\phi$ on a region $\Omega$ is called $K$ quasiconformal if
$$\frac{1}{K} \Mod Q \leq \Mod\phi(Q) \leq K \, \Mod Q$$
for all quadrilaterals $Q$ in $\Omega$.

Let $A(r, R)$ denote the annulus with center 0, inner radius $r$, and outer radius $R$. Given a topological annulus $A$, let $\Mod A$ denote the extremal length of the family of curves connecting the boundary components of $A$. It is well known that $\Mod A(r, R) = \frac{1}{2\pi} \log\frac{R}{r}$ (see Example 2 from Chapter 1 of~\cite{Ahlfors}).

Proposition~\ref{comparison} leads to the following criterion for parabolicity.

\begin{proposition}\label{parabolicity criterion}
    Let $X$ be a simply connected open Riemann surface, and let $K$ be a compact subset of $X$ with a nonempty interior. Let $\Gamma$ be the family of curves connecting the boundary of $K$ to the boundary of $X$, meaning that each curve in $\Gamma$ starts on the boundary of $K$ and is eventually in the complement of any compact subset of $X$. If $\lambda(\Gamma) = \infty$, then $X$ is parabolic.
\end{proposition}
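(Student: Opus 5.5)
We prove the contrapositive: if $X$ is not parabolic, then $\lambda(\Gamma)<\infty$. Since $X$ is simply connected and open, the uniformization theorem says it is conformally equivalent to either $\mathbb{C}$ or $\mathbb{D}$; being non-compact rules out $\overline{\mathbb{C}}$. So it suffices to show that if $X$ is hyperbolic, then $\lambda(\Gamma)<\infty$. Let $\phi:\mathbb{D}\to X$ be a conformal isomorphism. By Proposition~\ref{invariance}, applied on each compact piece and then by a limiting argument, or directly because conformal maps preserve the quantities $\rho(\gamma)$ and $A(\rho)$ under the pullback $\rho\mapsto(\rho\circ\phi)|\phi'|$, we have $\lambda(\Gamma)=\lambda(\phi^{-1}(\Gamma))$. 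The family $\phi^{-1}(\Gamma)$ consists of curves in $\mathbb{D}$ that start on $\partial K'$, where $K'=\phi^{-1}(K)$ is compact in $\mathbb{D}$, and that eventually leave every compact subset of $\mathbb{D}$, i.e. tend to $\partial\mathbb{D}$.

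Because $K'$ is compact, there is an $r<1$ with $K'\subset \overline{B(0,r)}$. Every curve of $\phi^{-1}(\Gamma)$ starts at a point of modulus at most $r$ and leaves every disk $\overline{B(0,s)}$ with $s<1$. For each $s\in(r,1)$, such a curve therefore contains a subarc joining $\{|z|=r\}$ to $\{|z|=s\}$ inside $A(r,s)$. The cleanest route is to bound the extremal length directly with a single metric. Take $\rho=1$ on $A(r,1)$ and $\rho=0$ elsewhere. Then $A(\rho)=\pi(1-r^2)$, which is finite and nonzero, so $\rho$ is allowable. Every $\gamma\in\phi^{-1}(\Gamma)$ travels radially at least from modulus $r$ to moduli arbitrarily close to $1$, so $\rho(\gamma)\ge 1-r$. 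This is only a lower bound on $L(\rho)$, however, and gives $\lambda\ge$ something, which is the wrong direction.

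To bound $\lambda$ from above we must use the definition as a supremum over metrics, so the real tool is a dual or comparison estimate. Via Proposition~\ref{comparison}, if every $\gamma\in\phi^{-1}(\Gamma)$ contains a curve of some family $\Gamma'$, then $\lambda(\phi^{-1}(\Gamma))\ge\lambda(\Gamma')$, which is again a lower bound. An upper bound instead requires showing that $\phi^{-1}(\Gamma)$ contains a family with finite extremal length: if $\Gamma''\subset\phi^{-1}(\Gamma)$, then $\lambda(\phi^{-1}(\Gamma))\le\lambda(\Gamma'')$, because the infimum defining $L(\rho)$ runs over more curves. Take $\Gamma''$ to be the radial segments $\{te^{i\theta}: t_\theta\le t<1\}$, where $t_\theta$ is the last parameter at which the ray meets $K'$. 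These lie in $\phi^{-1}(\Gamma)$, provided the starting point is on $\partial K'$, which holds since the last exit point lies on the boundary. For any allowable $\rho$, Cauchy--Schwarz gives
\[
L(\rho)^2\le\Big(\int_{r_0}^1\rho(te^{i\theta})\,dt\Big)^2\le(1-r_0)\int_{r_0}^1\rho^2\,dt ,
\]
where $r_0$ is the minimum of $t_\theta$, which is positive if $0$ is in the interior of $K'$. We may arrange this after a Möbius automorphism of $\mathbb{D}$, using the nonempty interior of $K$. Using $t\ge r_0$, integrating over $\theta$, and rearranging yields $L(\rho)^2\le\frac{1-r_0}{2\pi r_0}A(\rho)$. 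Hence $\lambda(\Gamma)\le\frac{1-r_0}{2\pi r_0}<\infty$, contradicting $\lambda(\Gamma)=\infty$.

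The main obstacle is the bookkeeping in this final step. One must ensure the radial segments genuinely belong to $\Gamma$: they start exactly on $\partial K'$ and eventually leave every compact set. One must also justify invariance of $\lambda$ for curves that are only locally rectifiable and run off to the ideal boundary. I would handle the latter by observing that the pullback of metrics under $\phi$ is a bijection on allowable metrics preserving both $\rho(\gamma)$ and $A(\rho)$.
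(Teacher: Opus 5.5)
Your final argument is correct, but it reaches finiteness through a different comparison family than the paper. The paper uniformizes $X$ onto $\mathbb{D}$ with an interior point of $K$ sent to $0$, and picks $r$ with $\overline{B(0,r)}$ inside the image of $K$. It notes that every curve joining $|z|=r$ to $\partial\mathbb{D}$ contains, after its last exit from the image of $K$, a curve of the image of $\Gamma$. By the comparison principle this bounds $\lambda(\Gamma)$ by the extremal length of the annulus family, and the paper then quotes $\frac{1}{2\pi}\log\frac{1}{r}<\infty$. You instead restrict to the subfamily of radial last-exit segments, which already belongs to $\phi^{-1}(\Gamma)$. You use the monotonicity $\lambda(\phi^{-1}(\Gamma))\le\lambda(\Gamma'')$ and bound $\lambda(\Gamma'')\le\frac{1-r_0}{2\pi r_0}$ by Cauchy--Schwarz along rays. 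Your route is self-contained: it reproves the upper-bound half of the annulus computation with a cruder constant, which is all that finiteness needs. Its comparison curves are also genuine segments of length less than $1$, so no care about curves running off to the ideal boundary is needed at that stage. The paper's route is shorter, given the classical annulus formula. Two small fixes are needed. First, take $r_0=\inf_\theta t_\theta$ rather than a minimum, which need not be attained; it is at least $\delta>0$ once $B(0,\delta)\subset K'$. Second, delete your first two attempts (the metric $\chi_{A(r,1)}$ and the lower-bound use of the comparison principle), which are dead ends.
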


\begin{proof}
    Suppose for sake of contradiction that $X$ is hyperbolic. By the uniformization theorem, we may let $\phi$ be a conformal map from $X$ to the unit disk $\mathbb{D}$. We may further ensure that $\phi$ maps an interior point of $K$ to 0. We may thus choose $0<r<1$ such that $B(0, r) \subset \phi(K)$. Let $\Gamma'$ be the family of curves connecting the boundary of $B(0, r)$ to the boundary of the unit disk. By Propositions~\ref{invariance} and~\ref{comparison}, we have $$\lambda(\Gamma') \geq \lambda(\phi(\Gamma)) = \lambda(\Gamma)=\infty,$$ so $\lambda(\Gamma')=\infty$. However, the extremal length of the curve family connecting opposite boundary components of the annulus $A(r, 1)$ is $\frac{1}{2\pi} \ln \frac{1}{r}$, and this quantity is clearly not infinite. We have obtained a contradiction, showing that $X$ is parabolic.
\end{proof}

\section{Random Differentials on Partitions of the Plane}\label{random differentials}

Let $\mathcal{P} = \{ \mathcal{D}_1, \mathcal{D}_2, \mathcal{D}_3, \dots \}$ be a partition of the complex plane $\mathbb{C}$ into a countable number of Jordan regions $\mathcal{D}_i$ such that they cover $\mathbb{C}$ and have disjoint interiors. Here we take a Jordan region to contain its boundary. We make several weak assumptions to control the behavior of the partition $\mathcal{P}$. We define the mesh size $d$ of $\mathcal{P}$ by the formula
$$d=\sup_i \{ \textnormal{diam } \mathcal{D}_i \},$$
where $\textnormal{diam } \mathcal{D}_i$ denotes the diameter of $\mathcal{D}_i$. We say that $\mathcal{P}$ has \emph{bounded density} if for all $R>0$, there exists a constant $k_R$ such that for all $p \in \mathbb{C}$, the disk $B(p, R)$ intersects at most $k_R$ regions of $\mathcal{P}$. We then have the following definition.

\begin{definition}
    A partition $\mathcal{P}$ has \emph{bounded geometry} if it has finite mesh size and has bounded density.
\end{definition}

We discuss a construction of a random Beltrami differential in the plane $\mathbb{C}$. For each region $\mathcal{D}_i$, let $\mu_t^{(i)}(z)$ denote a family of Beltrami differentials on $\mathcal{D}_i$ parametrized by $t \in \mathbb{R}$. Here, the value of $t$  is chosen by some probability measure $\nu_i$ on $\mathbb{R}$, so that $\mu_t^{(i)}(z)$ is a randomly chosen function on $\mathcal{D}_i$. From here forward, we make the assumption that for each $t$ and $i$ there exists a constant $k_t^{(i)}<1$ such that $|\mu_t^{(i)} (z)|\leq k_t^{(i)}$ for all $z \in \mathcal{D}_i$. Thus the quasiconformal dilatation $K=\frac{1+|\mu_t^{(i)} (z)|}{1-|\mu_t^{(i)} (z)|}$ is bounded in each $\mathcal{D}_i$. For each $i$, we require $k_t^{(i)}$ to be measurable as a function of $t$.

Let $\mu$ be the random Beltrami coefficient on $\mathbb{C}$ such that the value of $\mu(z)$ in each $\mathcal{D}_i$ is $\mu_t^{(i)}(z)$, where $t$ is chosen independently and at random by $\nu_i$ in each $\mathcal{D}_i$ (the existence of $\mu$ is guaranteed by the Kolmogorov extension theorem). Let $K_\mu(z) = \frac{1+|\mu(z)|}{1-|\mu(z)|}$ be the quasiconformal dilatation associated with $\mu$. Even though $K_\mu$ is bounded in each $\mathcal{D}_i$, we don't necessarily have an upper bound on $K_\mu$ that holds across all $\mathbb{C}$. Thus the measurable Riemann mapping theorem does not guarantee the existence of a quasiconformal homeomorphism $w^\mu: \mathbb{C} \to \mathbb{C}$ whose Beltrami coefficient is $\mu$. However, Ivrii and Marković~\cite{ivrii markovic} outline a method of constructing $w^\mu$. If we consider the truncated Beltrami differential $\mu_R = \mu \cdot \chi_{B(0, R)}$, then $K_{\mu_R}$ is bounded on all of $\mathbb{C}$ because $B(0, R)$ intersects finitely many regions $\mathcal{D}_i$ by the finite density of $\mathcal{P}$ and $K_\mu$ is bounded in each $\mathcal{D}_i$. Thus the measurable Riemann mapping theorem guarantees the existence of a quasiconformal homeomorphism $w_R: \mathbb{C} \to \overline{\mathbb{C}}$ with Beltrami differential $\mu_R$ that fixes the points $-1, 0, 1$. We may then take a convergent subsequence of the $w_R$ as $R \to \infty$ that converges to a map $w^\mu: \mathbb{C} \to \overline{\mathbb{C}}$ with Beltrami differential $\mu$. We may postcompose $w^\mu$ with a Möbius transformation to ensure that it fixes $0, 1, \infty$ (where $w^\mu$ is said to fix $\infty$ if $w^\mu(\mathbb{C}) \subset \mathbb{C}$). The map $w^\mu$ is injective, but it is not necessarily surjective.

We now define what probabilistically bounded means in Theorem~\ref{parabolicity}.

\begin{definition}
    The random differential $\mu$ is \emph{probabilistically bounded} if for all $\epsilon>0$, there exists a constant $k<1$ such that for all $i$, the probability that $|\mu| \leq k$ everywhere on $\mathcal{D}_i$ is at least $1- \epsilon$.
\end{definition}

Since $K_\mu = \frac{1+|\mu|}{1-|\mu|}$, we obtain an equivalent definition below. Indeed, the constant $K$ in the definition below is connected to the constant $k$ in the definition above by the formula $K=\frac{1+k}{1-k}$.

\begin{definition}
    The random differential $\mu$ is \emph{probabilistically bounded} if for all $\epsilon>0$, there exists a constant $K<\infty$ such that for all $i$, the probability that $K_\mu \leq K$ everywhere on $\mathcal{D}_i$ is at least $1- \epsilon$.
\end{definition}

For our second main theorem, we also need some periodicity assumptions. We say that $\mathcal{P}$ is periodic with period $\Gamma$ if there is a two dimensional lattice of vectors $\Gamma$ such that if $v \in \Gamma$, then translation by $v$ is a symmetry of $\mathcal{P}$. We additionally say that a random Beltrami differential $\mu$ on  $\mathcal{P}$ is periodic with period $\Gamma$ if $\mu$ is distributed in the same way in $\mathcal{D}_i$ and $\mathcal{D}_{i'}$ whenever $\mathcal{D}_{i'}$ is the translation of $\mathcal{D}_i$ by $v \in \Gamma$. We show the following lemma about periodic differentials.

\begin{lemma}\label{periodic=>bounded}
    If $\mu$ is a periodic differential on a periodic partition $\mathcal{P}$ of bounded geometry, then it is probabilistically bounded.
\end{lemma}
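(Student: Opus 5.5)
The plan is to reduce to finitely many regions by periodicity, and then use the fact that for each fixed region the supremum of $|\mu|$ is almost surely strictly less than $1$.

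\emph{Finitely many orbits.} Fix a fundamental parallelogram $F$ of the lattice $\Gamma$. Every region $\mathcal{D}_i$ has a $\Gamma$-translate meeting $F$: pick $z\in\mathcal{D}_i$ and translate by $v\in\Gamma$ so that $z+v\in F$. Since $F$ is bounded, it lies in some disk $B(p,R)$, and by bounded density only $k_R$ regions meet it. Hence the regions of $\mathcal{P}$ fall into at most $k_R$ orbits under translation by $\Gamma$. Choose representatives $\mathcal{D}_{i_1},\dots,\mathcal{D}_{i_m}$ with $m\le k_R$. By periodicity of $\mu$, the law of $\mu$ on $\mathcal{D}_i$ equals its law on the translated representative. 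So for any $k<1$,
\[
P\big(|\mu|\le k \text{ on } \mathcal{D}_i\big) = P\big(|\mu|\le k \text{ on } \mathcal{D}_{i_j}\big)
\]
for the appropriate $j$.

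\emph{Each single region.} For a fixed region $\mathcal{D}_{i_j}$, the standing assumption gives $|\mu_t^{(i_j)}|\le k_t^{(i_j)}<1$ for every $t$, with $k_t^{(i_j)}$ measurable in $t$. The events
\[
E_n=\{t : k_t^{(i_j)}\le 1-1/n\}
\]
are measurable and increase to all of $\mathbb{R}$. By continuity of the probability measure $\nu_{i_j}$, we get $\nu_{i_j}(E_n)\to 1$. So given $\epsilon>0$, there is $n_j$ with $\nu_{i_j}(E_{n_j})\ge 1-\epsilon$. On $E_{n_j}$ we have $|\mu|\le 1-1/n_j$ everywhere on $\mathcal{D}_{i_j}$.

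\emph{Combining.} Set $k=\max_j (1-1/n_j)<1$. This maximum is over finitely many values, so it is still less than $1$. Then every region $\mathcal{D}_i$ has $|\mu|\le k$ everywhere with probability at least $1-\epsilon$, which is exactly probabilistic boundedness.

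\emph{Main obstacle.} The only delicate points are the finiteness of the orbit count, and measurability. I will use the bound $k_t^{(i)}$ rather than $\sup|\mu|$ directly, so that no measurability of the supremum over an uncountable set is needed. One further subtlety: periodicity is phrased as "$\mu$ is distributed in the same way" on translated regions. I interpret this as equality of the laws of the translated random functions. This makes the probability of the event $\{|\mu|\le k \text{ on } \mathcal{D}\}$ translation invariant; strictly, one should transfer the bound through $k_t$, or assume the law is given through the same family and measure. I will state this interpretation explicitly in the proof.
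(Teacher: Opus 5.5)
Your proof is correct and follows essentially the same route as the paper: a continuity-of-measure argument through the measurable bounds $k_t^{(i)}$ for each single region, then a reduction via periodicity and bounded density to the finitely many regions meeting a fundamental parallelogram, and finally a maximum over those regions. You also spell out that every region has a $\Gamma$-translate meeting the parallelogram, a step the paper uses without stating it.
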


\begin{proof}
    We first show that for all $i \in \mathbb{N}$ and $\epsilon>0$, we may choose $k_i<1$ such that $|\mu|<k_i$ on $\mathcal{D}_i$ with probability $1-\epsilon$. Recall that $k_t^{(i)}$ is a measurable function of $t$ such that $\mu_t^{(i)}(z) \leq k_t^{(i)}$ on $\mathcal{D}_i$. For given $k<1$, let $B_k^{(i)}$ denote the set of $t \in \mathbb{R}$ such that $k_t^{(i)} \leq k$. Any $t$ belongs to $B_k^{(i)}$ for $k$ large enough, so continuity of measure implies that
    $$\lim_{k \to 1} \nu_i \left(B_k^{(i)}\right) = 1.$$
    Thus it is possible to choose $k_i<1$ large enough that $\nu_i(B_{k_i}^{(i)}) \geq 1-\epsilon$, meaning that $|\mu| \leq k_i^{(t)}\leq k_i$ on $\mathcal{D}_i$ with probability at least $1-\epsilon$.
    
    Let $u$ and $v$ be two vectors that generate $\Gamma$. The parallelogram spanned by $u$ and $v$ is contained within $B(0, R)$ for some $R$ large enough. The finite density of $\mathcal{P}$ implies that this parallelogram intersects finitely many regions $\mathcal{D}_i$. Fix $\epsilon>0$. For each region $\mathcal{D}_i$ intersecting the parallelogram, we may choose $k_i<1$ large enough that $|\mu| \leq k_i$ on $\mathcal{D}_i$ with probability $1-\epsilon$. If we let $k<1$ be the maximum of the $k_i$ across all $\mathcal{D}_i$ intersecting the parallelogram spanned by $u$ and $v$, it follows by periodicity that $|\mu| \leq k$ with probability $1-\epsilon$ in any region $\mathcal{D}_i$. Thus $\mu$ is probabilistically bounded.
\end{proof}

Suppose now that $\mu$ is a periodic Beltrami coefficient on a partition $\mathcal{P}$ of bounded geometry. As a consequence of Lemma~\ref{periodic=>bounded}, we may apply Theorem~\ref{almost sure surjectivity} to conclude that $w^\mu$ is almost surely surjective. Thus $w^\mu$ is defined by its Beltrami coefficient $\mu$ up to post-composition by a biholomorphism of the plane $\mathbb{C}$, which must be of the form $z \mapsto az+b$. From here forward, we require that $w^\mu$ fixes $0, 1, \infty$; this almost surely determines $w^\mu$ uniquely.

\section{Percolation}\label{percolation}

Let $\mathcal{P}$ be a partition of bounded geometry. We start by proving a lemma about the rate of growth of the number of regions $\mathcal{D}_i$ intersecting the disk $B(0, R)$.

\begin{lemma}\label{quadratic growth}
    Let $\mathcal{P}$ be a partition of bounded geometry. Let $k(R)$ denote the number of regions $\mathcal{D}_i$ that $B(0, R)$ intersects. Then there exist constants $m$ and $M$ depending only on $\mathcal{P}$ such that for all $R$ large enough, we have that
    $$mR^2 \leq k(R) \leq MR^2.$$
\end{lemma}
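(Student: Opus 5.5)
The plan is to prove the two bounds separately. The lower bound will come from finite mesh size, via an area argument. The upper bound will come from bounded density, via a covering argument. Throughout, let $d<\infty$ be the mesh size and $k_1$ the bounded-density constant for radius $1$.

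\textbf{Lower bound.} Every region $\mathcal{D}_i$ is a Jordan region of diameter at most $d$, so it lies in a closed disk of radius $d$ about any of its points. Hence $\operatorname{area}(\mathcal{D}_i)\le \pi d^2$. The closures of the regions cover $\mathbb{C}$, and in the paper's convention the regions already contain their boundaries. Therefore the regions meeting $B(0,R)$ cover $B(0,R)$, and summing areas gives
$$\pi R^2 \le k(R)\,\pi d^2.$$
This yields $k(R)\ge R^2/d^2$, so we may take $m=1/d^2$. If one worries about Jordan curves of positive area on the boundaries, this is harmless: we only need subadditivity of outer measure over a countable cover. Alternatively, we can use the bound $\operatorname{area}\le \pi d^2$ directly for each closed region.

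\textbf{Upper bound.} Cover $B(0,R)$ by unit disks centered at the points of the lattice $\frac{1}{2}\mathbb{Z}^2$ lying within distance $R+1$ of the origin. Every point of the plane is within distance $\sqrt{2}/4<1$ of some such lattice point, so these disks do cover $B(0,R)$. The number of such centers is at most $C(R+1)^2\le 4C R^2$ for $R\ge 1$, where $C$ is an absolute constant (for instance $C=4\pi+$ a boundary correction, obtained by counting lattice squares of area $1/4$ inside $B(0,R+2)$).

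Any region meeting $B(0,R)$ meets at least one of these unit disks. By bounded density, each unit disk meets at most $k_1$ regions. It follows that $k(R)\le 4Ck_1R^2$, so we take $M=4Ck_1$.

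The only step requiring any care is the measure-theoretic point in the lower bound, namely that the regions meeting $B(0,R)$ genuinely cover it. This follows because the regions are closed, contain their boundaries, and their union is $\mathbb{C}$. Everything else is a routine counting argument. I note in passing that $k(R)$ is finite for every $R$, by bounded density applied at radius $R$.
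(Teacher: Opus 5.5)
Your proof is correct. The upper bound follows the paper's argument: cover $B(0,R)$ by $O(R^2)$ unit disks, each of which meets at most $k_1$ regions. You simply make the cover explicit using $\frac12\mathbb{Z}^2$.

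The lower bound takes a different route. The paper uses a packing argument. It places at least $mR^2$ pairwise disjoint squares of side $2d$ inside $B(0,R)$ and assigns to each square the region containing its center. These regions are distinct, because the centers are more than $2d$ apart while each region has diameter at most $d$. You instead count area: the regions meeting $B(0,R)$ cover it, each has area at most $\pi d^2$, and subadditivity gives $k(R)\ge R^2/d^2$.

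Your version is shorter and gives the explicit constant $m=1/d^2$. It also holds for every $R>0$, not just for large $R$. It does use Lebesgue measure, but harmlessly: each region is compact, and overlapping or positive-area boundaries only inflate the sum. The paper's argument is purely metric, using only that every point lies in some region of diameter at most $d$. It would therefore transfer to settings without a convenient area measure, and it yields a well-separated family of regions rather than just a count.
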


\begin{proof}
    To prove the upper bound $k(R) \leq MR^2$, note that there exists a constant $M'$ such that $B(0, R)$ can be covered by $M'R^2$ disks of radius 1 for $R$ large enough. By the definition of bounded density, each of those disks intersects at most $k_1$ regions, so $B(0, R)$ intersects at most $k_1M'R^2$ regions. The upper bound is thus proved with $M=k_1M'$.

    Let $d$ denote the mesh size of $\mathcal{P}$, meaning that
    $d \geq \textnormal{diam } \mathcal{D}_i$ 
    for all $i$. To prove the lower bound, note that there exists a constant $m$ such that for all $R$ large enough, the interior of $B(0, R)$ contains at least $mR^2$ squares of side length $2d$ that don't intersect each other, even at the boundary. To each square $Q$ we can associate a region $\mathcal{D}_Q$ that contains the center of $Q$. Note that the regions $\mathcal{D}_Q$ are all distinct, since clearly a single region $\mathcal{D}$ of diameter at most $d$ cannot contain the centers of two squares, which are more than $2d$ apart. It follows that $B(0, R)$ contains at least $mR^2$ regions.
\end{proof}

Note that it follows from the proof of Lemma~\ref{quadratic growth} that the finiteness of $k_1$ implies the finiteness of $k_R$ for all $R$, since $k_R \leq k_1M'R^2$. Thus bounded density can be thought of as a local condition in the sense that the finiteness of $k_R$ for any $R$ implies the finiteness of $k_R$ for all $R$. 

We now describe a percolation process on $\mathcal{P}$. Fix a constant $0 \leq r \leq 1$ and color each region $\mathcal{D}_i$ blue or yellow independently and at random such that the probability that any given region is yellow is at most $r$ (note that the probabilities that two different regions are yellow don't have to be the same, as long as they are both independent and don't exceed $r$). In this case, $r$ is called the \emph{percolation parameter}. For any rectifiable curve $\gamma$, let its chemical length $d_\textnormal{chem}(\gamma)$ be defined as the length of its intersection with the blue regions. For any two points $x$ and $y$, let $d_\textnormal{chem}(x, y)$ denote the infimum of $d_\textnormal{chem}(\gamma)$ across all curves $\gamma$ connecting $x$ and $y$. The Euclidean distance between $x$ and $y$ will be denoted by $d(x, y)$. We are now ready to prove our main lemma about percolation, which states that the chemical distance is comparable to the Euclidean distance with high probability. The lemma is motivated by Lemma 4.2 of~\cite{ivrii markovic}.

\begin{lemma}\label{percolation lemma}
    Let $\mathcal{P}$ be a partition of bounded geometry. Then there exists a positive constant $r < \frac{1}{2}$ dependent on $\mathcal{P}$ only such that if the percolation parameter is at most $r$, then for any $N$ large enough the following holds with probability at least $1 - \frac{1}{N^2}$: for any two points $x, y \in B(0, N)$ with $d(x, y) \geq \log N$ we have $$\frac{1}{10} \cdot  d(x, y) \leq d_\textnormal{chem}(x, y) \leq d(x, y).$$
\end{lemma}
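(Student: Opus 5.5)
The upper bound $d_\textnormal{chem}(x,y) \le d(x,y)$ is immediate. The straight segment from $x$ to $y$ is a rectifiable curve, and its chemical length is at most its Euclidean length. All the work is in the lower bound, which I would prove by a Peierls-type counting argument over coarse-grained paths, as in Lemma 4.2 of~\cite{ivrii markovic}.

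\textbf{Discretization.} Fix a scale $s = C d$, with $d$ the mesh size and $C$ a large absolute constant. Tile the plane by the grid of closed squares of side $s$.
\begin{itemize}
\item By bounded density, each square (more precisely, its $d$-neighborhood) meets at most $k_{2s}$ regions.
\item Call a square \emph{bad} if some region meeting it is yellow. A union bound shows a square is bad with probability at most $k_{2s} r$.
\item Two squares whose $d$-neighborhoods are disjoint are bad independently. This holds for squares at grid distance $\ge 3$, say, once $C\ge 1$, since then no region can meet both.
\end{itemize}

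\textbf{Reduction to a counting statement.} Let $\gamma$ join $x$ to $y$ with $d(x,y)\ge \log N$. Record the sequence of grid squares $\gamma$ visits, and extract a lattice path of squares $Q_0,\dots,Q_n$ with consecutive squares adjacent. Since $\gamma$ must cross $n$ squares, $n \ge d(x,y)/s$. Whenever $\gamma$ passes fully through a good square, it traverses at least about $s - 2d$ of blue length there. Blue length near the boundary of a good square is still blue, because every region meeting a good square is blue. More carefully, I would use the middle sub-square of side $s/3$: a crossing of the annulus between that sub-square and the square's boundary gives blue length at least $s/3$.

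Hence it suffices to show the following. With probability at least $1-N^{-2}$, every lattice path of $n \ge \log N/s$ squares starting within $B(0,2N)$ has at least $n/2$ good squares, counted among a subfamily of pairwise well-separated squares (say every fifth square along a self-avoiding extraction). On that event,
\[
d_\textnormal{chem}(\gamma) \ge c\, n\, s \ge \tfrac{1}{10}\, d(x,y),
\]
after choosing the constants, for instance $c = 1/30$ per separated good square with suitable bookkeeping. The factor $1/10$ leaves room for these losses. Curves that leave $B(0,2N)$ are handled separately: they have Euclidean length at least $N$, so the same estimate applies to their initial portion.

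\textbf{Counting.} The number of lattice paths of length $n$ from a given start square is at most $4^n$. The number of starting squares in $B(0,2N)$ is $O(N^2)$. For a fixed path, pick $\lfloor n/5\rfloor$ mutually independent squares. The probability that at least half of them are bad is at most
\[
2^{n/5}(k_{2s} r)^{n/10}.
\]
Choose $r$ small, depending only on $k_{2s}$ and hence only on $\mathcal{P}$, so that
\[
4\cdot 2^{1/5}(k_{2s}r)^{1/10} \le e^{-A},
\]
with $A$ large. Summing over $n \ge \log N/s$ and over the starts gives a bound of
\[
O\!\left(N^2 e^{-A\log N/s}\right) \le N^{-2}
\]
once $A > 4s$ and $N$ is large.

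\textbf{Main obstacle.} The delicate step is the geometric extraction: converting an arbitrary rectifiable curve into a lattice path whose good, well-separated squares each force a definite amount of blue length. The difficulty is that $\gamma$ may wiggle and revisit squares. I would handle it by using loop-erasure on the square sequence. I would also count only "crossings" of the annuli around good squares, which $\gamma$ must make when it passes from the middle sub-square to outside the square. The remaining care is in the constants, so that the final factor is at least $1/10$.
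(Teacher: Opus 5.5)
\textbf{Gap in the reduction step.} Your upper bound and your union-bound arithmetic are fine. Your skeleton is also the paper's: a Peierls-type count of coarse-grained chains, with independence coming from spatial separation. The gap is in the reduction step you flag as the main obstacle, and that step is where the content of the lemma lies.

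First, every fifth square of a self-avoiding (loop-erased) lattice path need not be separated from the others. A self-avoiding path can fold back: the path $(0,0),(1,0),(2,0),(2,1),(1,1),(0,1)$ puts $Q_5$ next to $Q_0$, and a tight serpentine does this repeatedly. So the events you multiply can share regions, and the bound $2^{n/5}(k_{2s}r)^{n/10}$ is unjustified.

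Second, and more seriously, you never show that the good squares on the extracted path yield blue length. In general they do not, since goodness of a square that $\gamma$ merely visits forces nothing. For instance, the line $y=x+\epsilon$ has a $4$-connected chain of visited squares, every other one of which it meets only in a segment of length $\sqrt{2}\,\epsilon$. Your middle-third annulus only helps for squares whose middle third $\gamma$ actually enters. Nothing ties those squares to the extracted path; a curve running along grid lines enters none of them. So the inference from ``at least half of the selected squares are good'' to $d_{\textnormal{chem}}(\gamma)\ge cns$ is unsupported.

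Third, even granting that inference, your bookkeeping gives $\frac{n}{10}\cdot\frac{s}{3}\ge\frac{1}{30}d(x,y)$, not the stated $\frac{1}{10}d(x,y)$.

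\textbf{The missing idea.} The paper selects the coarse cells greedily from $\gamma$ itself and makes goodness a property of a neighborhood. Fix $R>3d$. Add a region entered by $\gamma$ to $V$ whenever it lies at distance at least $R$ from every region already in $V$. Then all selected regions are pairwise $R$-separated, not just consecutive ones. The graph joining regions of $V$ at distance at most $2R$ is connected, which yields a chain $\sigma$ of $L\ge d(x,y)/(3R)$ regions.

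Call a region \emph{insular} if every region meeting its $\frac{R}{3}$-neighborhood is blue. The curve $\gamma$ touches every region of $\sigma$ and must travel between regions that are $R$ apart. So it crosses each of these disjoint, all-blue neighborhoods and gains $\frac{R}{3}$ of blue length per insular region, however it meets the region itself. Pairwise separation also makes the insularity events independent. Counting chains by $MN^2k_{3R}^L$ and requiring $\frac{9}{10}$ of the regions to be insular gives exactly $\frac{9}{10}\cdot\frac{R}{3}\cdot\frac{d(x,y)}{3R}=\frac{1}{10}d(x,y)$.

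The same repair works on your grid. Call a square good if every region meeting its $3\times 3$ block is blue, and select squares greedily along $\gamma$ at pairwise index distance at least $4$. This yields a constant of about $\frac{9}{10}\cdot\frac{1}{5\sqrt{2}}>\frac{1}{10}$. It should replace loop-erasure, every-fifth selection, and the middle-third annuli.
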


\begin{proof}
    Let $d$ denote the mesh size of $\mathcal{P}$. Fix $R>3d$. Let the $R$-neighborhood of a region $\mathcal{D}_i$ denote the set of all points whose distance to $\mathcal{D}_i$ is less than $R$. Whenever two regions are at a distance at least $R$ from each other, then their $\frac{R}{3}$-neighborhoods do not intersect any regions in common.
    
    Consider two points $x, y \in B(0, N)$. We construct a graph $G=(V, E)$ as follows, where the vertex set $V$ consists of regions $\mathcal{D}$ in $\mathcal{P}$ and the edge set $E$ consists of pairs of regions in $V$. Add the region containing $x$ to $V$. Consider a path $\gamma$ leading from $x$ to $y$. As we travel along $\gamma$, if we enter a region that is at a distance of at least $R$ from any other region in $V$, we add the new region to $V$. Lastly, construct an edge between two elements of $V$ if and only if the distance between the corresponding regions is at most $2R$.
    
    We claim that $G$ is connected. Indeed, we may show that as we travel along $\gamma$, every region we add to $V$ belongs to the same connected component of $G$ as the region containing $x$. If we add region $\mathcal{D}$ to $V$ as we travel along $\gamma$, let $\mathcal{D}'$ denote the region $\gamma$ was in before entering $\mathcal{D}$. Then $\mathcal{D}'$ is not in $V$, so its distance to a region in $V$ is at most $R$. Thus the distance from $\mathcal{D}$ to a region in $V$ is at most $R+d < 2R$. It follows that $\mathcal{D}$ is in the same connected component as the previous regions in $V$. Thus $G$ is connected.
    
    Lastly, note that $V$ must contain a region whose distance to $y$ is at most $2R$. Indeed, if the region containing $y$ is not in $V$, then its distance to some region in $V$ is at most $R$, and the distance from $y$ to a region in $V$ is at most $R+d < 2R$. We may thus choose a sequence $\sigma$ of regions in $V$ starting with the region containing $x$ such that any two consecutive regions in $\sigma$ are at a distance of at most $2R$ apart, and the last region is at a distance of at most $2R$ from $y$. By construction, each region in $\sigma$ is at a distance of at least $R$ from its neighbor. Let $L$ denote the number of regions in $\sigma$.
    
    Call a region $\mathcal{D}$ in $\sigma$ \emph{insular} if each region intersecting the $\frac{R}{3}$-neighborhood of $\mathcal{D}$ is blue. If at least $\frac{9}{10}$ of the regions in $\sigma$ are insular, it follows that
    \begin{equation}\label{chem length}
        d_\textnormal{chem}( \gamma) \geq \frac{9}{10}\frac{R}{3}L.
    \end{equation}
    Choose a sequence of points $x=x_1, x_2, \dots, x_L$ such that $x_i$ belongs to the $i$th region in $\sigma$. Since any two regions in $V$ are at most $2R$ apart and the diameter of each region is at most $d$, it follows that $d(x_i, x_{i+1}) \leq 2R+2d<3R$. Similarly, $d(x_L, y)<3R$. It follows that
    \begin{equation}\label{dist length}
        L \geq \frac{d(x, y)}{3R}
    \end{equation}
    Substituting Equation~\ref{dist length} into Equation~\ref{chem length}, we obtain
    $$d_\textnormal{chem}(\gamma) \geq \frac{9}{10} \frac{R}{3} \frac{d(x, y)}{3R} = \frac{1}{10}d(x, y).$$
    Our strategy is now to show that with high probability, for all possible sequences $\sigma$ at least $\frac{9}{10}$ of the regions in $\sigma$ are insular.

    If $d(x, y) \geq \log N$, then Equation~\ref{dist length} implies that the length $L$ of an associated sequence $\sigma$ of regions is at least $\frac{\log N}{3R}$. Call a sequence $\sigma$ of regions \emph{associable} if the distance between any two consecutive regions in $\sigma$ is at most $2R$ and the distance between any two regions in $\sigma$ is at least $R$. We establish lower bounds on the probability that for any associable sequence of length $L \geq \frac{\log N}{3R}$, at least $\frac{9}{10}$ of its regions are insular.
    
    Set $k=k_{\frac{R}{3}+d}$ so that the $\frac{R}{3}$-neighborhood of any region intersects at most $k$ regions (this is possible because $\mathcal{P}$ has bounded density). Thus the probability that a region is not insular is at most
    $$1 - (1-r)^k \leq 1-(1-rk) = rk.$$
    The probability that more than $\frac{L}{10}$ regions in an associable sequence of length $L$ are not insular is thus at most
    $$\sum_{j = \lceil \frac{L}{10} \rceil} ^L {L \choose j} (rk)^j \leq 2^L \sum_{j = \lceil \frac{L}{10} \rceil} ^{\infty} (rk)^j \leq 2^{L+1} (rk)^\frac{L}{10},$$
    where the last inequality holds as soon as we set $r$ small enough that $rk \leq 
    \frac{1}{2}$. We used the fact that the distance between any two regions of an associable sequence is at least $R$ to conclude that their $\frac{R}{3}$-neighborhoods don't intersect any regions in common, allowing us to conclude that the probabilities that two regions of the sequence are insular are independent.

    By Lemma~\ref{quadratic growth}, there exists a constant $M$ such that for all $N$ large enough, the disk $B(0, N)$ intersects at most $MN^2$ regions of the partition $\mathcal{P}$. Fix a region $\mathcal{D}$ of an associable sequence $\sigma$. Any region whose distance to $\mathcal{D}$ is at most $2R$ must intersect the disk of radius $2R+d<3R$ centered around any point in $\mathcal{D}$, meaning that there are at most $k_{3R}$ choices for the next region in $\sigma$ after $\mathcal{D}$. Thus the number of associable sequences of length $L$ is at most $MN^2 k_{3R}^L$, since there are at most $MN^2$ choices of starting region. Thus the probability that there exists a sequence of length $L$ for which at least $\frac{L}{10}$ regions aren't insular is at most $MN^2 k_{3R}^L \cdot 2^{L+1}(rk)^{\frac{L}{10}} = 2MN^2 \alpha_r^L$, where $\alpha_r = 2k_{3R}(rk)^{\frac{1}{10}}$. Summing over all $L \geq \frac{\log N}{3R}$, we obtain that
    \begin{align*}
        \sum_{L = \lceil \frac{\log N}{3R} \rceil}^\infty 2MN^2\alpha_r^L &= 2MN^2 \sum_{L = \lceil \frac{\log N}{3R} \rceil}^\infty \alpha_r^L \\
        &\leq 4MN^2 \alpha_r^{(\log N)/3R}\\
        &= 4MN^2  N^{ (\log \alpha_r)/3R} \leq \frac{1}{N^2}
    \end{align*}
    for all $N$ large enough if we choose $r$ small enough. Note that $r$ does not depend on $N$ as $N \to \infty$.

    We thus have that if we choose $r$ small enough, then with probability at least $1-\frac{1}{N^2}$ \emph{all} associable sequences of length at least $\frac{\log N}{3R}$ satisfy the property that at least $\frac{9}{10}$ of their regions are insular. Thus for any pair of points $x, y$ with $d(x, y) \geq \log N$, we have that
    $$d_\textnormal{chem} (\gamma) \geq \frac{1}{10} d(x, y)$$
    for any curve $\gamma$ connecting $x$ and $y$. It follows that with probability at least $1-\frac{1}{N^2}$, we have
    $$d_\textnormal{chem}(x, y) \geq \frac{1}{10} d(x, y)$$
    for any pair of points $x, y$ with $d(x, y) \geq \log N$. The proof is complete.

\end{proof}

\section{Rough Quasiconformality}\label{sect: rough quasiconformality}

In this section, we introduce a notion of ``rough quasiconformality" and prove that the mapping $w^\mu$ is roughly quasiconformal with high probability. We then use our results to prove Theorem~\ref{almost sure surjectivity}. The definitions and proofs are largely based on the work of Ivrii and Marković~\cite{ivrii markovic}.

\begin{definition}
    An orientation preserving homeomorphism $\phi$ is called $(K, \epsilon)$ \emph{roughly quasiconformal} if it changes the modulus of any rectangle with side lengths of at least $\epsilon$ by a factor of at most $K$.
\end{definition}

We now show that the random quasiconformal map $w^{\mu}$ is roughly quasiconformal with high probability on $B(0, N)$.

\begin{lemma}\label{rough quasiconformality}
    Let $\mu$ be a probabilistically bounded Beltrami coefficient on a partition $\mathcal{P}$ with bounded geometry. There is a fixed constant $K$ such that for any $N$ large enough, $w^\mu$ is $(K, \log N)$ roughly quasiconformal on $B(0, N)$ with probability $1-\frac{1}{N^2}.$
\end{lemma}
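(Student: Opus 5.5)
We couple the random coefficient $\mu$ to the percolation process of Section~\ref{percolation} and then run an extremal length argument in the spirit of Ivrii and Marković~\cite{ivrii markovic}. Let $r<\frac12$ be the constant from Lemma~\ref{percolation lemma}. Since $\mu$ is probabilistically bounded, we can choose $K_0<\infty$ such that for every $i$ the event $\{K_\mu\le K_0 \text{ on } \mathcal{D}_i\}$ has probability at least $1-r$. Colour $\mathcal{D}_i$ blue if this event holds and yellow otherwise. The colours are independent because the parameters $t$ are chosen independently, and each region is yellow with probability at most $r$. Lemma~\ref{percolation lemma} then gives an event $E_N$ of probability at least $1-\frac{1}{N^2}$ on which
\[
\tfrac{1}{10}\, d(x,y)\le d_{\rm chem}(x,y)\le d(x,y)
\]
for all $x,y\in B(0,N)$ with $d(x,y)\ge \log N$. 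We will show that on $E_N$ the map $w^\mu$ is $(K,\log N)$ roughly quasiconformal on $B(0,N)$, with $K$ depending only on $K_0$.

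Fix a rectangle $Q\subset B(0,N)$ with side lengths $a,b\ge \log N$, and let $\Gamma$ be the family of curves joining its marked sides. Because both $\Mod Q$ and its reciprocal (the conjugate quadrilateral) must be controlled, it suffices to prove a one-sided bound $\lambda(w^\mu(\Gamma))\ge c\,\lambda(\Gamma)$ with $c$ depending only on $K_0$, applied to both $\Gamma$ and the conjugate family. Since $\lambda(\Gamma)=a/b$ for a rectangle, we need lower bounds on extremal lengths in the image. The natural choice is to pull back metrics: given an admissible $\rho'$ on $w^\mu(Q)$, define
\[
\rho=\rho'\circ w^\mu\,\bigl(|\partial w^\mu|+|\bar\partial w^\mu|\bigr)\chi_{\text{blue}}.
\]
On blue regions $w^\mu$ is $K_0$-quasiconformal, so $A(\rho)\le K_0\,A(\rho')$. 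The difficulty is that curves may shortcut through yellow regions, where $\rho=0$.

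This shortcutting is the main obstacle, and it is handled in the opposite direction, by working with the modulus rather than with $\rho'$. We apply the usual test metric $\rho\equiv 1$ on $Q$ restricted to the blue set, push it forward by $w^\mu$, and use the chemical distance bound. Any curve joining the marked sides of $Q$ has Euclidean endpoint separation at least $a\ge \log N$, so on $E_N$ its blue length is at least $a/10$. Pushing forward $\rho=\chi_{\text{blue}}$ gives an image metric $\rho'$ with $\rho'(w^\mu\gamma)\ge a/10$ and
\[
A(\rho')\le K_0\,|Q\cap\text{blue}|\le K_0\,ab.
\]
Hence
\[
\lambda(w^\mu\Gamma)\ge \frac{(a/10)^2}{K_0\,ab}=\frac{1}{100K_0}\,\lambda(\Gamma).
\]
Applying the same argument to the conjugate family bounds $\Mod w^\mu(Q)$ from the other side, so we may take $K=100K_0$. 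The point requiring the most care is that the pushed-forward metric is only supported on the images of the blue regions, where $w^\mu$ has bounded dilatation. The area bound for $\rho'$ then needs only the local $K_0$-quasiconformality of $w^\mu$ on blue regions together with the change of variables formula $\iint_{w(U)}(\rho\circ w^{-1})^2|(w^{-1})_z|^2\ldots\le K_0\iint_U\rho^2$. The lower bound on $\rho'$-lengths uses the percolation lemma. Rectangles near the boundary of $B(0,N)$ cause no trouble, because the lemma controls every pair of points in $B(0,N)$ at distance at least $\log N$.
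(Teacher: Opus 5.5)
Your proposal is correct and is essentially the paper's proof. It uses the same blue/yellow colouring obtained from probabilistic boundedness, the test metric $\chi_{\mathcal{B}\cap Q}$ pushed forward by $w^\mu$, the chemical-distance bound from Lemma~\ref{percolation lemma}, and the conjugate family for the reverse inequality, yielding $K=100K_0$. The only cosmetic difference is the normalization: you make the pushed-forward metric preserve lengths and lose $K_0$ in area, whereas the paper uses the Jacobian normalization to preserve area and loses $\sqrt{K'}$ in lengths, which gives the same constant. Your pull-back detour is unnecessary.
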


\begin{proof}
    By probabilistic boundedness of $\mu$, it is possible to choose a constant $K>1$ such that the probability that
    $$K_\mu \leq K'$$
    is at least $1-r$, where $r$ is the constant whose existence is guaranteed by Lemma~\ref{percolation lemma}. Color a region $\mathcal{D}$ of the partition yellow if $K_\mu > K'$, and color it blue otherwise. The regions are colored independently, and the probability that a region is yellow is at most $r$. It follows by Lemma~\ref{percolation lemma} that for all $N$ large enough, we have with probability at least $1-\frac{1}{N^2}$ that
    $$d_\textnormal{chem}(x, y) \geq \frac{1}{10} d(x, y)$$
    for all $x, y$ with $d(x, y) \geq \log N$.

    Fix a rectangle $\textbf{R} \subset B(0, N)$ with side lengths $\ell_1, \ell_2 \geq \log N$ such that the sides of length $\ell_1$ are marked. We establish upper bounds on $\Mod w^\mu (\textbf{R})$. To this end, let $\mathcal{B}$ denote the union of the blue regions, and set the metric $\rho_\textbf{R} = \chi_{\mathcal{B} \cap \textbf{R}}$. Note that $A(\rho_\textbf{R}) \leq \ell_1 \ell_2$. Consider also the metric defined by
    $$\rho^*_\textbf{R}(w) = [\textnormal{Jac}(w^\mu)^{-1}(w)]^{\frac{1}{2}} \cdot \chi_{w^\mu(\mathcal{B} \cap \textbf{R})} (w),$$
    so that $A(\rho_\textbf{R}^*) = A(\rho_\textbf{R}) \leq \ell_1\ell_2$.

    Let $\Gamma$ denote the family of curves connecting opposite marked sides of length $\ell_1$ of \textbf{R}. Since $w^\mu$ is $K'$ quasiconformal in the blue regions, we have that
    $$\rho_\textbf{R}^*(w^\mu(\gamma)) \geq \frac{1}{\sqrt{K'}} \rho_\textbf{R}(\gamma)$$
    for all $\gamma \in \Gamma$. Since the distance between the endpoints of $\gamma$ is at least $\ell_2 \geq \log N$, it follows from Lemma~\ref{percolation lemma} that $\rho_\textbf{R}(\gamma) \geq \frac{\ell_2}{10}$ with probability $1-\frac{1}{N^2}$. Denoting by $\Gamma^*$ the family of curves connecting opposite marked sides of $w^\mu(\textbf{R})$, it follows that
    $$L_{\rho_\textbf{R}^*}(\Gamma^*) \geq \frac{1}{10\sqrt{K'}} \ell_2.$$
    Thus
    $$\Mod w^\mu(\textbf{R}) = \lambda(\Gamma^*)^{-1} \leq \frac{A(\rho_\textbf{R}^*)}{L_{\rho_\textbf{R}^*}(\Gamma^*)^2} \leq 100K' \frac{\ell_1 \ell_2}{\ell_2^2} = 100K' \Mod \textbf{R}$$
    for all \textbf{R} with probability $1-\frac{1}{N^2}$. It similarly follows that
    $$\Mod w^\mu(\textbf{R}) \geq \frac{1}{100K'} \ \Mod \textbf{R}$$
    by considering the family of curves connecting the unmarked pair of sides of \textbf{R}. Thus $w^\mu$ is $(100K', \log N)$ roughly quasiconformal with probability $1-\frac{1}{N^2}$, and the lemma is proved with $K=100K'$.
    
\end{proof}

Using Lemma~\ref{rough quasiconformality}, we are ready to begin the proof of Theorem~\ref{almost sure surjectivity}.

\begin{proof}[Proof of Theorem~\ref{almost sure surjectivity}]

Let $Q(n, N)$ denote the topological annulus consisting of the square $[-N, N]^2$ with the square $[-n, n]^2$ removed from its center. We have that $\Mod Q(n, N)$ denotes the extremal length of the family of curves connecting opposite boundary components of $Q(n ,N)$. We establish lower bounds on $\Mod \phi(Q(N, 2N))$, where $\phi$ is any $(K, N)$ roughly quasiconformal map.

Let $R_1$ denote the rectangle $[N, 2N] \times [-2N, 2N]$, and let $R_2, R_3, R_4$ denote the rotations of $R_1$ by $90^{\circ}, 180^{\circ}, 270^{\circ}$ degrees about the origin, respectively. Any path $\gamma$ that connects opposite boundary components of $Q(N, 2N)$ must cross $R_i$ for some $1 \leq i \leq 4$. Indeed, suppose without loss of generality that the ending point of $\gamma$ is on the side $x=2N$. Since the starting point of $\gamma$ has $x$-coordinate at most $N$, it follows that $\gamma$ crosses $R_1$. The rectangles $R_i$ are depicted in Figure~\ref{fig:Q(N, 2N)}.

Therefore a path $\gamma$ connecting opposite boundary components of $\phi(Q(N, 2N))$ must cross $\phi(R_i)$ for some $1 \leq i \leq 4$. Since the modulus of $R_i$ is $\frac{1}{4}$, it follows that the modulus of $\phi(R_i)$ is at least $\frac{1}{4K}$. For any $\epsilon>0$, there exists a metric $\rho_i$ such that $A(\rho_i) = 1$ and the $\rho_i (\gamma)^2 \geq \frac{1}{4K}-\epsilon$ for any curve $\gamma$ crossing $\phi(R_i)$.

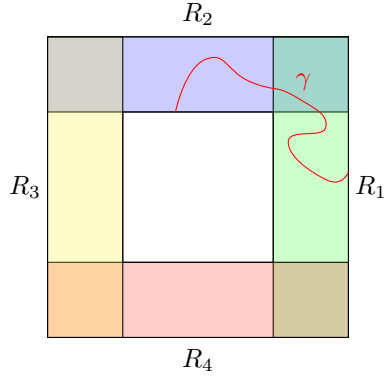
\begin{figure}[h]
\centering

\begin{tikzpicture}[scale=0.5]
    \def\N{2}
    \draw (-\N, \N) -- (\N, \N) -- (\N, -\N) -- (-\N, -\N) -- cycle;
    \draw (-2*\N, 2*\N) -- (2*\N, 2*\N) -- (2*\N, -2*\N) -- (-2*\N, -2*\N) -- cycle;

    \filldraw[fill=blue, draw=black, fill opacity=0.2] (-2*\N, 2*\N) rectangle (2*\N, \N);

    \filldraw[fill=green, draw=black, fill opacity=0.2] (\N, 2*\N) rectangle (2*\N, -2*\N);

    \filldraw[fill=red, draw=black, fill opacity=0.2] (2*\N, -2*\N) rectangle (-2*\N, -\N);

    \filldraw[fill=yellow, draw=black, fill opacity=0.2] (-\N, -2*\N) rectangle (-2*\N, 2*\N);

    \draw [red] plot [smooth, tension=1] coordinates{
        (-\N*0.3, \N)
        (\N*0.1, \N*1.7)
        (\N*0.7, \N*1.4)
        (\N*1.3, \N*1.2)
        (\N*1.7, \N*0.8)
        (\N*1.2, \N*0.6)
        (\N*1.7, \N*0.1)
        (2*\N, \N*0.2)
    };

    \node [red] at (\N*1.4, \N*1.4) {$\gamma$};

    \node at (\N*2.3, 0) {$R_1$};
    \node at (\N*0, \N*2.3) {$R_2$};
    \node at (-\N*2.3, 0) {$R_3$};
    \node at (0, -\N*2.3) {$R_4$};

\end{tikzpicture}
    
\caption{Curve $\gamma$ connecting opposite boundary components of $Q(N, 2N)$}
\label{fig:Q(N, 2N)}
\end{figure}

Consider now the metric $\rho=\rho_1+\dots + \rho_4$. By Muirhead's inequality, $\rho^2 \leq 4(\rho_1^2+\rho_2^2+\rho_3^2+\rho_4^2)$. Thus $A(\rho) \leq  16$ and $\rho(\gamma)^2\geq \frac{1}{4K}-\epsilon$ for any curve $\gamma$ connecting opposite boundary components of $\phi(Q(N, 2N))$. It follows that the extremal length of the curve family crossing $\phi(Q(N, 2N))$ is at least
$$\frac{L(\rho)^2}{A(\rho)} \geq \frac{1}{64K} - \frac{\epsilon}{16}.$$
Letting $\epsilon$ tend to $0$, we obtain
$$\Mod \phi(Q(N, 2N)) \geq \frac{1}{64K}$$

Note that $Q(2^kN, 2^{k+1}N)$ is contained in $B(0, 2^{k+1}\sqrt{2}N)$. Choose $N$ large enough that $N \geq \log (2\sqrt{2}N)$ and Lemma~\ref{rough quasiconformality} applies to $B(0, N')$ for all $N' \geq 2\sqrt{2}N$. Consider the sequence of nested annuli
$$Q(N, 2N), Q(2N, 4N), Q(4N, 8N), \dots$$
With probability $1-\frac{1}{2^{2k+3}N^2}$, we have that $w^\mu$ is $(K, 2^kN)$ roughly quasiconformal in $B(0, 2^{k+1}\sqrt{2}N)$; in fact, $w^\mu$ satisfies the stronger assumption of $(K, \log(2^{k+1}\sqrt{2}N))$ rough quasiconformality. We therefore have that the extremal length of the curve family connecting opposite boundary components of $Q(2^kN, 2^{k+1}N)$ is at least $\frac{1}{64K}$. By the Borel-Cantelli lemma, this holds almost surely for all but finitely many $k$. By Proposition~\ref{composition}, the extremal length of the curve family connecting $w^\mu([-N, N]^2)$ to the boundary of $w^\mu(\mathbb{C})$ is at least $\frac{1}{64K}+\frac{1}{64K}+\dots  = \infty$. By Proposition~\ref{parabolicity criterion}, it follows that $w^\mu(\mathbb{C})$ is parabolic almost surely. This means that $w^\mu$ is surjective almost surely, completing the proof.

\end{proof}

\section{Rescaling}\label{rescaling}

We now turn to proving Theorem~\ref{approximate linearity}. Given $\delta$, let $I_\delta$ denote the map $z \mapsto \delta z$. Let $\delta \mathcal{P}$ denote the partition $\mathcal{P}$ scaled by applying the map $I_\delta$, meaning that the mesh size of $\delta \mathcal{P}$ is smaller than the mesh size of $\mathcal{P}$ by a factor of $\delta$. We may consider the corresponding random Beltrami differential $\mu_\delta = \mu \circ I_\delta^{-1}$ on the partition $\delta \mathcal{P}$. Lastly, let $w_\delta^\mu$ denote the quasiconformal map with Beltrami differential $\mu_\delta$. We rephrase Lemma~\ref{rough quasiconformality} for $w_\delta^\mu$.

\begin{lemma}\label{rescaled rough quasiconformality}
    Fix any $N, \epsilon>0$. For any $\delta$ small enough, the map $w^\mu_\delta$ is $(K, \epsilon)$ roughly quasiconformal on $B(0, N)$ with probability at least $1-\epsilon$.
\end{lemma}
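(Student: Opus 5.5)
The plan is to reduce the statement to Lemma~\ref{rough quasiconformality} by conjugating with the scaling $I_\delta$, which preserves moduli of rectangles and turns the rescaled problem back into the unscaled one on a larger disk. The first step is to relate $w^\mu_\delta$ to $w^\mu$. The composition $w^\mu\circ I_\delta^{-1}$ has Beltrami coefficient $\mu\circ I_\delta^{-1}=\mu_\delta$, because $I_\delta$ is conformal. By almost sure surjectivity (Theorem~\ref{almost sure surjectivity}), any two homeomorphisms of $\mathbb{C}$ with the same coefficient differ by post-composition with an affine map $z\mapsto az+b$. Hence, almost surely,
$$w^\mu_\delta = T\circ w^\mu\circ I_\delta^{-1}$$
for some affine $T$. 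Since $T$ and $I_\delta^{-1}$ are conformal, both preserve the modulus of every quadrilateral (Proposition~\ref{invariance}). Should one prefer to avoid the normalization issue, note that the argument below does not actually need the identity: it uses only that $w^\mu_\delta$ is some homeomorphism with coefficient $\mu_\delta$, and the proof of Lemma~\ref{rough quasiconformality} applies verbatim to any such map.

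Next, take a rectangle $\mathbf{R}\subset B(0,N)$ with side lengths at least $\epsilon$. Its preimage $I_\delta^{-1}(\mathbf{R})=\delta^{-1}\mathbf{R}$ is a rectangle inside $B(0,N/\delta)$ with side lengths at least $\epsilon/\delta$, and $\Mod \delta^{-1}\mathbf{R}=\Mod\mathbf{R}$. It then follows that
$$\Mod w^\mu_\delta(\mathbf{R})=\Mod w^\mu(\delta^{-1}\mathbf{R}).$$
Therefore it suffices to show that $w^\mu$ is $(K,\epsilon/\delta)$ roughly quasiconformal on $B(0,N/\delta)$ with probability at least $1-\epsilon$.

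To get this, set $N'=N/\delta$ and apply Lemma~\ref{rough quasiconformality}. For $N'$ large enough, $w^\mu$ is $(K,\log N')$ roughly quasiconformal on $B(0,N')$ with probability at least $1-1/N'^2$. The required scale is $\epsilon/\delta=\epsilon N'/N$, which grows linearly in $N'$. So once $\delta$ is small enough we have both $\epsilon N'/N\ge \log N'$ and $1/N'^2\le\epsilon$. Rectangles with sides at least $\epsilon/\delta$ then have sides at least $\log N'$, and the conclusion follows with the same constant $K$ as in Lemma~\ref{rough quasiconformality}.

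The main point requiring care is the first step: justifying that rough quasiconformality transfers along the conjugation. One must check that the percolation and extremal-length argument of Lemma~\ref{rough quasiconformality} uses only the Beltrami coefficient and never the normalization at $0,1,\infty$, which I believe is true since it bounds moduli via metrics pulled back through $w^\mu$. A secondary check is that rectangles in the rescaled picture indeed lie in $B(0,N')$ and that the $\log N'$ threshold is met uniformly for all such rectangles; both follow from the linear-versus-logarithmic comparison above.
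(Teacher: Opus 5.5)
Your proposal is correct and is the paper's argument: the paper also reduces the claim to showing that $w^\mu$ is $(K,\delta^{-1}\epsilon)$ roughly quasiconformal on $B(0,\delta^{-1}N)$, then applies Lemma~\ref{rough quasiconformality} with $\delta$ small enough that $\delta^{-1}\epsilon \geq \log(\delta^{-1}N)$ and $\delta^2/N^2 < \epsilon$. Your justification of the identity $w^\mu_\delta = T\circ w^\mu\circ I_\delta^{-1}$, which the paper simply asserts as an equivalence, is a useful addition but does not change the approach.
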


\begin{proof}
    Showing that $w^\mu_\delta$ is $(K, \epsilon)$ roughly quasiconformal on $B(0, N)$ is equivalent to showing that $w^\mu$ is $(K, \delta^{-1}\epsilon )$ roughly quasiconformal on $B(0, \delta^{-1}N)$. However, this is true with probability at least $1- \frac{\delta^2}{N^2}$ for $\delta$ small enough that $\delta^{-1}\epsilon  \geq \log (\delta^{-1}N)$ by Lemma~\ref{rough quasiconformality}. It remains to further choose $\delta$ small enough that $\frac{\delta^2}{N^2}<\epsilon$.
\end{proof}

We now obtain a slightly modified form of Theorem~\ref{approximate linearity}.

\begin{theorem}\label{rescaled approximate linearity}
    Let $\mu$ be periodic and $\mathcal{P}$ be a periodic partition with bounded geometry. Then there exists a linear map $A_\mu$ such that for any $\epsilon>0$, there exists a constant $\delta_\epsilon$ such that if $\delta \leq \delta_\epsilon$ is small enough, then $|A_\mu - w_\delta^\mu|<\epsilon $ on $B(0, 1)$ with probability at least $1-\epsilon$. Here $A_\mu$ depends only on the probability distribution according to which $\mu$ is chosen.
\end{theorem}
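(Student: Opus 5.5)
The plan is to follow the homogenization strategy of Ivrii and Marković~\cite{ivrii markovic}, with Lemma~\ref{rescaled rough quasiconformality} taking the place of their uniform quasiconformality input. There are three steps: precompactness of the rescaled maps, a law of large numbers for moduli of large quadrilaterals, and identification of every subsequential limit with a single linear map.

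\textbf{Step 1: precompactness.} Since $w^\mu_\delta$ fixes $0,1,\infty$ and is, with probability at least $1-\epsilon$, $(K,\epsilon')$ roughly quasiconformal on $B(0,N)$ for any prescribed $\epsilon'$, I would first prove that rough quasiconformality at small scale $\epsilon'$ gives equicontinuity down to scale $\epsilon'$. The argument reuses the nested-annulus estimate from the proof of Theorem~\ref{almost sure surjectivity}: each annulus $Q(\rho,2\rho)$ with $\rho\ge\epsilon'$ has image of modulus at least $\frac{1}{64K}$. Stacking roughly $\log(1/\epsilon')$ such annuli around a point gives a modulus of continuity $|w(z)-w(z')|\le C|z-z'|^{\alpha}$ for $|z-z'|\ge \epsilon'$. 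The same argument applied to the inverse picture, together with the normalization, gives a two-sided bound. Consequently, along any sequence $\delta_n\to 0$, the laws of $w^\mu_{\delta_n}$ restricted to $\overline{B(0,1)}$ are tight in the uniform topology. Every subsequential limit in distribution is a $K$-quasiconformal map $\phi$ fixing $0$ and $1$; I would obtain this from the modulus characterization, since moduli of quadrilaterals pass to uniform limits.

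\textbf{Step 2: law of large numbers for moduli.} Fix a rectangle $\mathbf{R}$. By periodicity and independence across regions, the modulus of $w^\mu(\delta^{-1}\mathbf{R})$ should behave ergodically under translations by the lattice $\Gamma$. I would show that $\Mod w^\mu_\delta(\mathbf{R})$ concentrates, as $\delta\to0$, around a deterministic value $m(\mathbf{R})$. The proof would be subadditivity in the style of Ivrii--Marković: subdivide $\mathbf{R}$ into many small congruent rectangles $\mathbf{R}_j$ at an intermediate scale. The composition laws (Proposition~\ref{composition} in series and its dual in parallel) give one-sided inequalities between $\Mod w(\mathbf{R})$ and the $\Mod w(\mathbf{R}_j)$. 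By periodicity and independence of well-separated pieces, the $\Mod w(\mathbf{R}_j)$ are essentially i.i.d., so a weak law of large numbers applies. Since the quantities are bounded between $\frac{1}{K}$ and $K$ times the Euclidean modulus by Lemma~\ref{rough quasiconformality}, no integrability issue arises. This step is where I expect the main difficulty: the composition law only gives inequalities, and closing the gap needs the Ivrii--Marković trick of comparing curve families with images of grids at two scales. The plan there is to show that the limsup and the liminf of the expected normalized modulus coincide, using the fact that the defect is controlled by boundary layers whose contribution vanishes by rough quasiconformality.

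\textbf{Step 3: identification of the limit.} By translation invariance in law, the deterministic modulus function $m$ depends only on the shape and orientation of the rectangle, not its position. Any limit $\phi$ from Step 1 therefore multiplies the modulus of every rectangle with rational corners by a deterministic factor that is independent of position. A quasiconformal map with this property has constant Beltrami coefficient, hence is affine. The normalization at $0$ and $1$ then pins it down to a unique real-linear map $A_\mu$ determined only by the law of $\mu$. Uniqueness of the subsequential limit, combined with tightness, gives convergence in probability of $w^\mu_\delta$ to $A_\mu$ uniformly on $B(0,1)$, which is the statement.
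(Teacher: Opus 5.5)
Your route is the paper's own. The paper gives no independent argument here: it asserts that Sections 6--7 of Ivrii--Markovi\'c apply verbatim once Lemma~\ref{rescaled rough quasiconformality} gives rough quasiconformality with high probability, with periodicity entering in their Lemma 6.3. One correction: their input is already rough quasiconformality obtained from percolation (Lemma~\ref{percolation lemma} is modeled on their Lemma 4.2), not uniform quasiconformality. Two of your steps, however, fail as written.

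\textbf{Steps 1 and 3: the limit is taken only on the unit disk.} You pass to limits only on $\overline{B(0,1)}$, and there the inference ``constant Beltrami coefficient, hence affine'' is false. A quasiconformal map of $B(0,1)$ whose coefficient equals that of a linear map $A$ has the form $h\circ A$ with $h$ merely conformal on $A(B(0,1))$. Conformal maps preserve the modulus of every quadrilateral, so no modulus information, deterministic or not, can distinguish $h\circ A$ from $A$. The normalization $h(0)=0$, $h(A(1))=1$ still leaves many such $h$. The repair is as follows:
\begin{enumerate}[(i)]
\item Apply Lemma~\ref{rescaled rough quasiconformality} on $B(0,N)$ for every $N$, and obtain tightness for locally uniform convergence on all of $\mathbb{C}$ by a diagonal argument.
\item The limits are then $K$-quasiconformal embeddings of $\mathbb{C}$ into $\mathbb{C}$, hence onto.
\item Only then conclude: $h$ is an injective entire function, hence affine, and the normalization pins it down.
\end{enumerate}

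\textbf{Step 2: the mechanism gives no inequality.} This is not merely a technical gap. Cut $\mathbf{R}$ into columns $C_i$ and cells $\mathbf{R}_{ij}$, and write $\lambda(Q)$ for the extremal length of the crossing family of a quadrilateral $Q$.
\begin{itemize}
\item The series law (Proposition~\ref{composition}) gives $\lambda(w(\mathbf{R}))\ge\sum_i\lambda(w(C_i))$.
\item The parallel law gives $\lambda(w(C_i))\le\bigl(\sum_j\lambda(w(\mathbf{R}_{ij}))^{-1}\bigr)^{-1}$.
\end{itemize}
These point in opposite directions and cannot be chained. A crossing of a column need not cross any single cell, so no lower bound on $\lambda(w(C_i))$ by cell moduli exists. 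The same happens with rows. Hence there is no inequality at all between $\Mod w(\mathbf{R})$ and the cell moduli, and a law of large numbers for the cells says nothing about $\Mod w(\mathbf{R})$.

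Your Step 3 depends on exactly this. Without a genuine limit $m(\mathbf{R})$ along the whole family $\delta\to0$, you only show that each subsequential limit is \emph{some} linear map, possibly depending on the subsequence. So the claimed uniqueness of subsequential limits, and the fact that $A_\mu$ depends only on the law of $\mu$, are unsupported. You need one of two things:
\begin{itemize}
\item an exactly subadditive quantity, together with an ergodic or concentration argument. For example, the conformally invariant Dirichlet energy with affine boundary data on squares is subadditive under gluing, since minimizers with the same affine data match across shared edges;
\item or to import the Ivrii--Markovi\'c argument wholesale, as the paper does.
\end{itemize}
Finally, the bounds $K^{-1}\Mod\mathbf{R}\le\Mod w(\mathbf{R})\le K\Mod\mathbf{R}$ hold only off an event of probability about $\epsilon$. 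Any argument using expectations therefore needs truncation, contrary to your remark that no integrability issue arises.
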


The proof of the theorem follows verbatim from sections 6 and 7 of~\cite{ivrii markovic} once rough quasiconformality is established with high probability in Lemma~\ref{rescaled rough quasiconformality}. Note that the assumption of periodicity is used in the proof of Lemma 6.3 in~\cite{ivrii markovic}. We now deduce Theorem~\ref{approximate linearity} from Theorem~\ref{rescaled approximate linearity}.

\begin{proof}[Proof of Theorem~\ref{approximate linearity}]
    We continue with the same notation as the statement of Theorem~\ref{approximate linearity}. Set $\delta=\frac{1}{R}$. Consider the map $w=I_\delta \circ w^\mu \circ I_R$. We have that the Beltrami coefficient of $w$ is the coefficient $\mu \circ I_R$ on the partition $\delta \mathcal{P}$, meaning that $w$ is distributed in the same way as $w_\delta^\mu$.
    
    It follows from Theorem~\ref{rescaled approximate linearity} that if $\delta \leq \delta_\epsilon$ is small enough (or equivalently $R\geq \delta_\epsilon^{-1}$ is large enough), then with probability $1-\epsilon$, we have $|w-A_\mu|<\epsilon$ on $B(0, 1)$. Therefore $|I_R \circ w \circ I_\delta - I_R \circ A_\mu \circ I_\delta|<\epsilon R$ on $B(0, R)$. However, since $I_R \circ w \circ I_\delta = w^\mu$ and $I_R \circ A_\mu \circ I_\delta = A_\mu$, it follows that with probability $1-\epsilon$, $$|w^\mu - A_\mu| < \epsilon R$$ on $B(0, R)$ as needed. Thus Theorem~\ref{approximate linearity} is proved with $R_\epsilon=\delta_\epsilon^{-1}$.
    
\end{proof}

\begin{remark}
    Note that Theorem~\ref{approximate linearity} holds when $B(0, R)$ is replaced by $I_R(K)$, where $K$ is an arbitrary compact set and $I_R(K)$ is the dilation of $K$ by a factor of $R$. Indeed, this holds by observing that $I_R(K) \subset B(0, cR)$, where $c$ is a constant so that $K \subset B(0, c)$. It then follows by Theorem~\ref{approximate linearity} that $|w^\mu-A_\mu| < \epsilon'cR$ on $I_R(K)$ for $R$ large enough, and it remains to choose $\epsilon' = c^{-1}\epsilon$.
\end{remark}

\section{Random Riemann Surfaces}\label{random riemann surfaces}

In this section, we define a subset $\mathcal{V}$ of $\mathcal{S}$ and define a probability measure $\eta_\mathcal{V}$ on $\mathcal{V}$. The need to consider a simpler subset of $\mathcal{S}$ arises from the difficulty of parametrizing all of $\mathcal{S}$. As we will see, the generality of Theorems~\ref{almost sure surjectivity} and~\ref{approximate linearity} gives us flexibility in the choice of $\mathcal{V}$, and $\mathcal{V}$ encompasses a wide variety of the surfaces in $\mathcal{S}$. For the remainder of this section, note that a surface in $\mathcal{S}$ is determined by its marked boundary points, so that a probability distribution on $\mathcal{S}$ can be thought of as a probability distribution according to which the marked boundary points are chosen.

View the Riemann sphere $\overline{\mathbb{C}}$ as the subset $\{(x, y, z) \ | \ x^2+y^2+z^2=1 \}$ of $\mathbb{R}^3$, so that the set of points with $z>0$ corresponds to the upper hemisphere $\mathbb{H}^+$ of complex numbers with positive imaginary part, and the set of points with $z<0$ corresponds to the lower hemisphere $\mathbb{H}^-$ of complex numbers with negative imaginary part. Consider the polar coordinate system $(r, \theta)$ on the unit disk $\{(x, y, 0) \ | \ x^2 + y^2 \leq 1 \}$ such that the points $1, \infty, -1, 0$ have coordinates $(1, 0)$, $(1, \frac{\pi}{2})$, $(1, \pi)$, $(1, \frac{3\pi}{2})$, respectively. In the hemispheres $\mathbb{H}^+$ and $\mathbb{H}^-$, define the polar coordinate systems $(r, \theta)$ so that the polar coordinates of a point $(x, y, z)$ in one of the hemispheres are the same as the polar coordinates of $(x, y, 0)$. From now on, call these the hemispherical coordinates on $\mathbb{H}^+$ and $\mathbb{H}^-$.

Consider an infinite rectangular lattice in the plane. Label the vertex at position $(i, j)$ with an integer between $1$ and $4$ depending on the parity of $i$ and $j$ such that the vertices of any cell are labeled $1, 2, 3, 4$ in cyclic order. Choose four points with hemispherical coordinates $(1, \theta_1),  (1, \theta_2), (1, \theta_3), (1, \theta_4)$ and open arc intervals $C_i$, $1 \leq i \leq 4$, so that the $C_i$ are disjoint, their closures cover $\mathbb{R} \cup \{ \infty \}$, and the endpoints of $C_i$ are $(1, \theta_i)$ and $(1, \theta_{i+1})$ (here $\theta_5 = \theta_1$). This is shown in Figure~\ref{fig:vertices}. Lastly, define probability distributions $\eta_i$ on the $C_i$. The distributions $\eta_i$ may for instance be the uniform distributions induced by arc length.

\begin{figure}[h!]
\centering

\begin{tikzpicture}[scale=0.5]
\def\gridsize{5}
\def\sidelength{1.5}
\def\overhang{0.75}

\begin{scope}[shift={(-8, 0)}]
    
    \foreach \i in {0,...,\gridsize}{
        \draw (-\overhang, \i*\sidelength) -- (\gridsize*\sidelength+\overhang, \i*\sidelength);

        \draw (\i*\sidelength, -\overhang) -- (\i*\sidelength, \gridsize*\sidelength+\overhang);

        \foreach \j in {0,...,\gridsize}{
            \node (X) at (\i*\sidelength, \j*\sidelength) {};
            \fill(X) circle (2pt);

            \pgfmathtruncatemacro\testi{mod(\i, 2)}          \pgfmathtruncatemacro\testj{mod(\j, 2)}

            \pgfmathtruncatemacro\test{2*\testi*\testj+3*\testi*(1-\testj)+1*(1-\testi)*\testj+4*(1-\testi)*(1-\testj)}

            \node[above right] at (X) [font=\scriptsize] {\test};

        }
    }
\end{scope}

\def\radius{3}

\begin{scope}[shift={(8, \gridsize/2*\sidelength)}]

    \draw (0, 0) circle (\radius);

    \draw[draw=black, fill=white] (10:\radius) circle (3pt);

    \node[right] at (10:\radius) {$(1, \theta_1)$};

    \draw[draw=black, fill=white] (80:\radius) circle (3pt);

    \node[above] at (80:\radius) {$(1, \theta_2)$};

    \draw[draw=black, fill=white] (180:\radius) circle (3pt);

    \node[left] at (180:\radius) {$(1, \theta_3)$};

    \draw[draw=black, fill=white] (300:\radius) circle (3pt);

    \node[below] at (300:\radius*1.05) {$(1, \theta_4)$};

    \node[above right] at (45:\radius) {$C_1$};

    \node[above left] at (130:\radius) {$C_2$};

    \node[below left] at (240:\radius) {$C_3$};

    \node[below right] at (335:\radius) {$C_4$};

\end{scope}

\end{tikzpicture}

\caption{Labeling the vertices and dividing $\mathbb{R} \cup \{ \infty \}$ into intervals.}
\label{fig:vertices}
\end{figure}
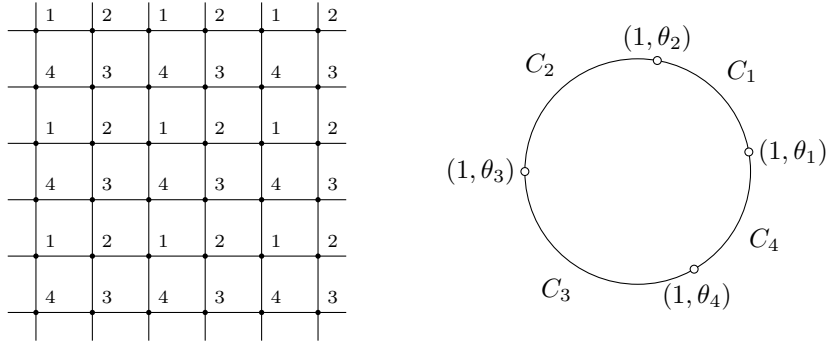

To construct a Riemann surface $\mathcal{R}$ in $\mathcal{S}$, we assign to each vertex labeled $i$ a random marked boundary point chosen by $\eta_i$. Gluing together the surface with these marked points, we obtain a Riemann surface. We let $\mathcal{V}$ be the set of surfaces obtainable in this way. The vertices of a surface in $\mathcal{V}$ are restricted to vary along an interval $C_i$ in order to guarantee that the marked boundary points of each cell are in cyclic order. Now, let $\eta_\mathcal{V}$ be the distribution on $\mathcal{V}$ corresponding to choosing each vertex according to $\eta_i$ (the existence of $\eta_\mathcal{V}$ is guaranteed by the Kolmogorov extension theorem). We thus have a probability space $(\mathcal{V}, \eta_\mathcal{V})$. Note that the definition of $(\mathcal{V}, \eta_\mathcal{V})$ depends on the choice of $\theta_i$.

The set $\mathcal{V}$ is large enough to contain a hyperbolic surface, as shown by Geyer and Merenkov~\cite{hyperbolic net}. However, Theorem~\ref{parabolicity}, which will be proved in Section~\ref{quasiconformal deformation}, states that the set of hyperbolic surfaces has measure 0 in $(\mathcal{V}, \eta_\mathcal{V})$. Thus there almost surely exists a biholomorphism $\phi: \mathcal{R} \to \mathbb{C}$. The surface $\mathcal{R}$ comes equipped with a mapping $p: \mathcal{R} \to \overline{\mathbb{C}}$ which sends each cell in $\mathcal{R}$ to the corresponding hemisphere of $\overline{\mathbb{C}}$. Therefore, a surface $\mathcal{R}$ in $\mathcal{V}$ can be viewed as the Riemann surface of the inverse to the meromorphic function $f=p \circ \phi^{-1}$, which almost always has the plane $\mathbb{C}$ as its domain of definition. Thus $\mathcal{R}$ almost always defines a meromorphic function $f$ in the plane up to precomposition by an automorphism of $\mathbb{C}$, which is of the form $z \mapsto az+b$. If we consider the Nevanlinna characteristic $T_f(r)$ of $f$, defined for $r $ in $[0, \infty)$, we see that precomposing $f$ by an automorphism of $\mathbb{C}$ at most changes the constant in front of $r$ and changes $T_f(r)$ by a bounded additive factor, thereby not changing the order or lower order of $f$. Theorem~\ref{order of growth estimate}, which references \emph{the} order of growth of $f$, therefore makes sense.

\section{Quasiconformal Deformation of an Elliptic Function}\label{quasiconformal deformation}

In this section we prove Theorems~\ref{parabolicity} and~\ref{order of growth estimate} using Theorems~\ref{almost sure surjectivity} and~\ref{approximate linearity}, respectively. We accomplish this by obtaining $f$ as a locally quasiconformal deformation of an elliptic function of order 2.

Let $(1, m_i)$ be the midpoint of $C_i$. Consider the upper hemisphere $\mathbb{H}^+$ with marked boundary points $(1, m_i)$. There exists some rectangle $Q \subset \mathbb{C}$ and a conformal map $\wp: Q \to \overline{\mathbb{C}}$ that takes $Q$ to $\mathbb{H}^+$ and the vertices of $Q$ to the points $(1, m_i)$. Extending $\wp$ by reflection we obtain a doubly periodic function $\wp: \mathbb{C} \to \overline{\mathbb{C}}$ that maps each cell of a rectangular grid in $\mathbb{C}$ onto either the upper hemisphere $\mathbb{H}^+$ or the lower hemisphere $\mathbb{H}^-$.

Define $\mathcal{R}_0$ to be the surface in $\mathcal{V}$ with the marked boundary point $(1, m_i)$ assigned to vertices labeled $i$ (here we are using the vertex labeling from Section~\ref{random riemann surfaces}). Then $\wp$ induces a mapping $\wp_0: \mathbb{C} \to \mathcal{R}_0$ such that $\wp = p_0 \circ \wp_0$, where $p_0: \mathcal{R}_0 \to \overline{\mathbb{C}}$ is the natural projection taking each cell of $\mathcal{R}_0$ to the corresponding hemisphere of $\overline{\mathbb{C}}$.

Given a Riemann surface $\mathcal{R}$ in $\mathcal{V}$ and the associated projection $p: \mathcal{R} \to \overline{\mathbb{C}}$, we deform $\wp_0$ into a function $\tilde \wp_0: \mathbb{C} \to \mathcal{R}$ so that $f=p \circ \tilde \wp_0$, where $f$ is such that $\mathcal{R}$ is the Riemann surface of $f^{-1}$.

Divide each hemisphere of $\mathcal{R}_0$ into eight sectors along the lines $\theta = \theta_i$ and $\theta=m_i$. Consider the preimages of these lines by $\wp_0$. They divide each cell of the rectangular grid in $\mathbb{C}$ into eight triangular Jordan regions, where each triangular region corresponds to a sector of a hemisphere on $\mathcal{R}_0$. This is demonstrated in Figure~\ref{fig:triangles}.

\begin{figure}[h]
    \centering

\begin{tikzpicture}[scale=0.5]
    \def\gridsize{4}
    \def\sidelength{2}
    \def\overhang{1}
    \foreach \i in {0,...,\gridsize}{
        \draw (-\overhang, \i*\sidelength) -- (\gridsize*\sidelength+\overhang, \i*\sidelength);

        \draw (\i*\sidelength, -\overhang) -- (\i*\sidelength, \gridsize*\sidelength+\overhang);

        \foreach \j in {0,...,\gridsize}{
            \begin{scope}[shift={(\i*\sidelength,\j*\sidelength)}]

            \draw[densely dotted] plot [smooth] coordinates{
            (-\sidelength/2, -\sidelength/2)
            (-\sidelength/6, -\sidelength/3)
            (0, 0)
            (\sidelength*0.2, \sidelength*0.4)
            (\sidelength/2, \sidelength/2)};

            \draw[densely dotted] plot [smooth] coordinates{
            (-\sidelength/2, \sidelength/2)
            (-\sidelength/3, \sidelength*0.37)
            (-\sidelength/4, \sidelength*0.15)
            (0, 0)
            (\sidelength/4, -\sidelength*0.2)
            (\sidelength/2, -\sidelength/2)
            };

            \ifnum\i<\gridsize
                \draw[densely dotted] plot [smooth] coordinates{
                (\sidelength/2, \sidelength/2)
                (\sidelength*0.45, \sidelength/4)
                (\sidelength/2, 0)
                (\sidelength*0.55, -\sidelength/3)
                (\sidelength/2, -\sidelength/2)
                };
            \fi

            \ifnum\j>0
                \draw[densely dotted] plot [smooth] coordinates{
                (-\sidelength/2, -\sidelength/2)
                (-\sidelength/4, -\sidelength*0.55)
                (0, -\sidelength/2)
                (\sidelength/4, -\sidelength*0.45)
                (\sidelength/2, -\sidelength/2)
                };
            \fi
            
            \end{scope}

            \node (X) at (\i*\sidelength, \j*\sidelength) {};
            \fill(X) circle (2pt);

            \pgfmathtruncatemacro\testi{mod(\i, 2)}          \pgfmathtruncatemacro\testj{mod(\j, 2)}

            \pgfmathtruncatemacro\test{2*\testi*\testj+3*\testi*(1-\testj)+1*(1-\testi)*\testj+4*(1-\testi)*(1-\testj)}

            \node[above right] at (X) [font=\scriptsize] {\test};

        }
    }

\end{tikzpicture}

    \caption{The cells of the rectangular grid divided into triangular Jordan regions.}
    \label{fig:triangles}
\end{figure}
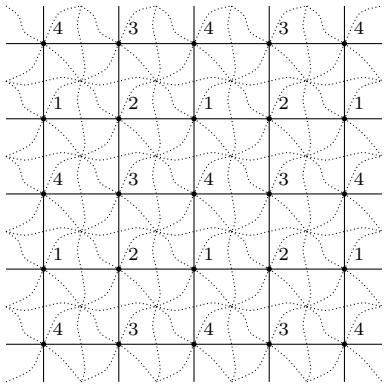

Fix some cell in $\mathcal{R}$. Suppose that the vertex with label $i$ of that cell has associated boundary point with coordinates $(1, \alpha_i)$ for $1 \leq i\leq 4$. Note that $(1, \alpha_i)$ belongs to the open arc interval $C_i$ with endpoints $(1, \theta_i), (1, \theta_{i+1})$. We apply a map of the form $(r, \theta) \mapsto \left(r, k(\theta)\right)$, where $k(\theta)$ is a piecewise linear map such that $(1, m_i)$ is taken to $(1, \alpha_i)$ and $(1, \theta_i)$ is fixed for $1 \leq i \leq 4$. Note that this map is quasiconformal with constant Beltrami coefficient in each of the eight sectors of each hemisphere. Applying these maps to each hemisphere, we obtain a locally quasiconformal map $\phi: \mathcal{R}_0 \to \mathcal{R}$. The action of $\phi$ on a single hemisphere is shown in Figure~\ref{fig:phi}.

\begin{figure}[h]

\centering

\begin{tikzpicture}[scale=0.5]
    \def\maxradius{3}
    \def\ncircles{4}
    \def\nradials{3}

    \def\thetaone{10}
    \def\thetatwo{80}
    \def\thetathree{180}
    \def\thetafour{275}

    \pgfmathsetmacro\mone{(\thetaone+\thetatwo)/2}
    \pgfmathsetmacro\mtwo{(\thetathree+\thetatwo)/2}
    \pgfmathsetmacro\mthree{(\thetathree+\thetafour)/2}
    \pgfmathsetmacro\mfour{(\thetafour+\thetaone+360)/2}
    
    \begin{scope}[shift={(-7,0)}]
        
        \foreach \r in {1,...,\ncircles} {
            \draw (0,0) circle (\r*\maxradius/\ncircles);
        }
        \foreach \i in {1,...,\nradials} {
        
            \pgfmathsetmacro\angle{\thetaone*\i/\nradials+\mone*(\nradials-\i)/\nradials}
            \draw[dotted] (0,0) -- (\angle:\maxradius);

            \pgfmathsetmacro\angle{\thetatwo*\i/\nradials+\mone*(\nradials-\i)/\nradials}
            \draw[dotted] (0,0) -- (\angle:\maxradius);

            \pgfmathsetmacro\angle{\thetatwo*\i/\nradials+\mtwo*(\nradials-\i)/\nradials}
            \draw[dotted] (0,0) -- (\angle:\maxradius);

            \pgfmathsetmacro\angle{\thetathree*\i/\nradials+\mtwo*(\nradials-\i)/\nradials}
            \draw[dotted] (0,0) -- (\angle:\maxradius);

            \pgfmathsetmacro\angle{\thetathree*\i/\nradials+\mthree*(\nradials-\i)/\nradials}
            \draw[dotted] (0,0) -- (\angle:\maxradius);

            \pgfmathsetmacro\angle{\thetafour*\i/\nradials+\mthree*(\nradials-\i)/\nradials}
            \draw[dotted] (0,0) -- (\angle:\maxradius);

            \pgfmathsetmacro\angle{\thetafour*\i/\nradials+\mfour*(\nradials-\i)/\nradials}
            \draw[dotted] (0,0) -- (\angle:\maxradius);

            \pgfmathsetmacro\angle{(\thetaone+360)*\i/\nradials+\mfour*(\nradials-\i)/\nradials}
            \draw[dotted] (0,0) -- (\angle:\maxradius);
            
        }
        \draw[thick] (0, 0) -- (\thetaone:\maxradius);
        \draw[fill=black] (\thetaone:\maxradius) circle (2pt);
        \node[right, font=\footnotesize] at (\thetaone:\maxradius) {$(1, \theta_1)$};

        \draw[thick] (0, 0) -- (\thetatwo:\maxradius);
        \draw[fill=black] (\thetatwo:\maxradius) circle (2pt);
        \node[above, font=\footnotesize] at (\thetatwo:\maxradius) {$(1, \theta_2)$};

        \draw[thick] (0, 0) -- (\thetathree:\maxradius);
        \draw[fill=black] (\thetathree:\maxradius) circle (2pt);
        \node[left, font=\footnotesize] at (\thetathree:\maxradius) {$(1, \theta_3)$};

        \draw[thick] (0, 0) -- (\thetafour:\maxradius);
        \draw[fill=black] (\thetafour:\maxradius) circle (2pt);
        \node[below, font=\footnotesize] at (\thetafour:\maxradius) {$(1, \theta_4)$};

        \draw[gray] (0, 0) -- (\mone:\maxradius);
        \draw[fill=black] (\mone:\maxradius) circle (1pt);
        \node[right, font=\footnotesize] at (\mone:\maxradius) {$(1, m_1)$};

        \draw[gray] (0, 0) -- (\mtwo:\maxradius);
        \draw[fill=black] (\mtwo:\maxradius) circle (1pt);
        \node[left, font=\footnotesize] at (\mtwo:\maxradius) {$(1, m_2)$};

        \draw[gray] (0, 0) -- (\mthree:\maxradius);
        \draw[fill=black] (\mthree:\maxradius) circle (1pt);
        \node[left, font=\footnotesize] at (\mthree:\maxradius) {$(1, m_3)$};

        \draw[gray] (0, 0) -- (\mfour:\maxradius);
        \draw[fill=black] (\mfour:\maxradius) circle (1pt);
        \node[right, font=\footnotesize] at (\mfour:\maxradius) {$(1, m_4)$};

    \end{scope}

    \draw[->, thick] (-1.5, 0) -- (1.5, 0) node[midway, above] {$\phi$};

    \pgfmathsetmacro\mone{60}
    \pgfmathsetmacro\mtwo{150}
    \pgfmathsetmacro\mthree{200}
    \pgfmathsetmacro\mfour{315}

    \begin{scope}[shift={(7,0)}]
        \foreach \r in {1,...,\ncircles} {
            \draw (0,0) circle (\r*\maxradius/\ncircles);
        }
        \foreach \i in {1,...,\nradials} {
        
            \pgfmathsetmacro\angle{\thetaone*\i/\nradials+\mone*(\nradials-\i)/\nradials}
            \draw[dotted] (0,0) -- (\angle:\maxradius);

            \pgfmathsetmacro\angle{\thetatwo*\i/\nradials+\mone*(\nradials-\i)/\nradials}
            \draw[dotted] (0,0) -- (\angle:\maxradius);

            \pgfmathsetmacro\angle{\thetatwo*\i/\nradials+\mtwo*(\nradials-\i)/\nradials}
            \draw[dotted] (0,0) -- (\angle:\maxradius);

            \pgfmathsetmacro\angle{\thetathree*\i/\nradials+\mtwo*(\nradials-\i)/\nradials}
            \draw[dotted] (0,0) -- (\angle:\maxradius);

            \pgfmathsetmacro\angle{\thetathree*\i/\nradials+\mthree*(\nradials-\i)/\nradials}
            \draw[dotted] (0,0) -- (\angle:\maxradius);

            \pgfmathsetmacro\angle{\thetafour*\i/\nradials+\mthree*(\nradials-\i)/\nradials}
            \draw[dotted] (0,0) -- (\angle:\maxradius);

            \pgfmathsetmacro\angle{\thetafour*\i/\nradials+\mfour*(\nradials-\i)/\nradials}
            \draw[dotted] (0,0) -- (\angle:\maxradius);

            \pgfmathsetmacro\angle{(\thetaone+360)*\i/\nradials+\mfour*(\nradials-\i)/\nradials}
            \draw[dotted] (0,0) -- (\angle:\maxradius);
            
        }
        \draw[thick] (0, 0) -- (\thetaone:\maxradius);
        \draw[fill=black] (\thetaone:\maxradius) circle (2pt);
        \node[right, font=\footnotesize] at (\thetaone:\maxradius) {$(1, \theta_1)$};

        \draw[thick] (0, 0) -- (\thetatwo:\maxradius);
        \draw[fill=black] (\thetatwo:\maxradius) circle (2pt);
        \node[above, font=\footnotesize] at (\thetatwo:\maxradius) {$(1, \theta_2)$};

        \draw[thick] (0, 0) -- (\thetathree:\maxradius);
        \draw[fill=black] (\thetathree:\maxradius) circle (2pt);
        \node[left, font=\footnotesize] at (\thetathree:\maxradius) {$(1, \theta_3)$};

        \draw[thick] (0, 0) -- (\thetafour:\maxradius);
        \draw[fill=black] (\thetafour:\maxradius) circle (2pt);
        \node[below, font=\footnotesize] at (\thetafour:\maxradius) {$(1, \theta_4)$};

        \draw[gray] (0, 0) -- (\mone:\maxradius);
        \draw[fill=black] (\mone:\maxradius) circle (1pt);
        \node[above right, font=\footnotesize] at (\mone:\maxradius) {$(1, \alpha_1)$};

        \draw[gray] (0, 0) -- (\mtwo:\maxradius);
        \draw[fill=black] (\mtwo:\maxradius) circle (1pt);
        \node[left, font=\footnotesize] at (\mtwo:\maxradius) {$(1, \alpha_2)$};

        \draw[gray] (0, 0) -- (\mthree:\maxradius);
        \draw[fill=black] (\mthree:\maxradius) circle (1pt);
        \node[below left, font=\footnotesize] at (\mthree:\maxradius) {$(1, \alpha_3)$};

        \draw[gray] (0, 0) -- (\mfour:\maxradius);
        \draw[fill=black] (\mfour:\maxradius) circle (1pt);
        \node[right, font=\footnotesize] at (\mfour:\maxradius) {$(1, \alpha_4)$};

    \end{scope}

\end{tikzpicture}

\caption{The action of $\phi$ on one hemisphere}
\label{fig:phi}

\end{figure}
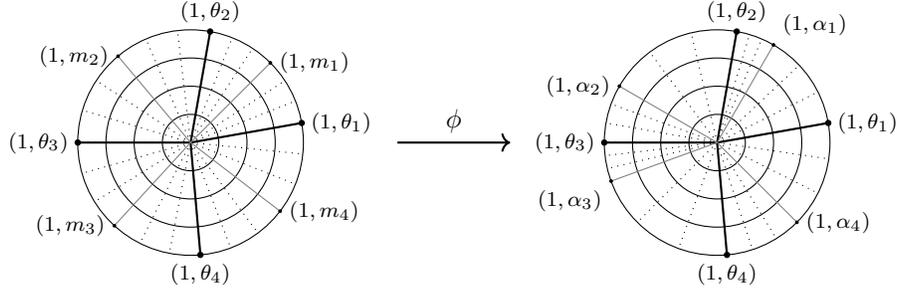

Let $\mathcal{P}$ be the partition each region of which is the set of eight triangular regions adjacent to a vertex, and let $\mu$ be the Beltrami coefficient of $p \circ \phi \circ \wp_0$. The measures $\nu_i$ introduced in Section~\ref{random differentials} are defined on $\mathbb{R}$, whereas the measures $\eta_i$ are defined on $C_i$. This causes no difficulty, since $C_i$ is homeomorphic to $\mathbb{R}$. Note that by the composition rules for Beltrami coefficients, the coefficient $\mu$ is no longer constant on each triangular region; however, $|\mu|$ is constant on each triangular region and depends continuously on $\alpha_i$. It follows that the function $k_t^{(i)}$ from Section~\ref{random differentials} can be chosen to be measurable in $t$; in fact, it can be chosen to be continuous.

Consider the random locally quasiconformal map $w^\mu$. If we set $f=p \circ \phi \circ \wp_0 \circ (w^\mu)^{-1}$, then the composition rules of Beltrami coefficients imply that $f$ is conformal. Furthermore $\mathcal{R}$ is the Riemann surface of $f^{-1}$ (see Figure~\ref{fig:f}).

\begin{figure}[h!]
\centering

\begin{tikzcd}
\mathbb{C} \arrow[r, "\wp_0"] \arrow[d, "w^\mu"] & \mathcal{R}_0 \arrow[r, "\phi"] & \mathcal{R} \arrow[d, "p"] \\
w^\mu(\mathbb{C}) \arrow[rr, dashed, "f"] & & \overline{\mathbb{C}}
\end{tikzcd}

\caption{$\mathcal{R}$ is the Riemann surface of $f^{-1}$.}
\label{fig:f}

\end{figure}

The partition $\mathcal{P}$ is periodic and has bounded geometry, and $\mu$ is periodic on $\mathcal{P}$. Thus Lemma~\ref{periodic=>bounded} implies that $\mu$ is probabilistically bounded, so  Theorems~\ref{almost sure surjectivity} and~\ref{approximate linearity} are applicable. We now obtain a quick proof of Theorem~\ref{parabolicity}.

\begin{proof}[Proof of Theorem~\ref{parabolicity}]
    Note that $\phi \circ \wp_0 \circ (w^\mu)^{-1}$ is a conformal homeomorphism from $w^\mu (\mathbb{C})$ to $\mathcal{R}$. By Theorem~\ref{almost sure surjectivity}, we have that $w^\mu$ is almost surely surjective onto $\mathbb{C}$, so $\phi \circ \wp_0 \circ (w^\mu)^{-1}$ is almost surely a biholomorphism from $\mathbb{C}$ to $\mathcal{R}$.
\end{proof}

We now want to estimate the order of  $f$. Since Theorem~\ref{approximate linearity} is applicable, we are ready for the proof of Theorem~\ref{order of growth estimate}.

\begin{proof}[Proof of Theorem~\ref{order of growth estimate}]

Let $\sigma$ be the spherical metric on $\overline{\mathbb{C}}$ scaled so that $\overline{\mathbb{C}}$ has area 1. We define $A(t)$ to be the area of $B(0, t)$ in the pullback metric $f^*\sigma$. According to a well known formula of Ahlfors and Shimizu (Equation~12 in~\cite{intro to nevanlinna}), $$T_f(r) = \int_0^r \frac{A(t)}{t} dr + O(1).$$ We thus need to estimate the growth of $A(t)$.

Let $D(0, R)$ denote the ellipse such that $A_\mu(D(0, R))=B(0, R)$, where $A_\mu$ is the linear map whose existence is guaranteed by Theorem~\ref{approximate linearity}. Note that $D(0, R)$ is $D(0, 1)$ scaled by a factor of $R$. By the remark at the end of the proof of Theorem~\ref{approximate linearity}, the conclusion of Theorem~\ref{approximate linearity} holds with $B(0, R)$ replaced by $D(0, R)$. Fix $\epsilon>0$. We may choose a sequence $R_n$ such that the conclusion of Theorem~\ref{approximate linearity} holds for $R_n$ with probability at least $1-\frac{\epsilon}{2^n}$ for all positive integers $n$. By the Borel-Cantelli lemma, with probability 1 the conclusion of Theorem~\ref{approximate linearity} holds for infinitely many $n$. The following deductions hold for these values of $n$.

For infinitely many values of $R_n$, for $t \in \left[\frac{R_n}{2}, R_n\right]$,
$$I_{1-2\epsilon} (A_\mu(D(0, t))) \subset w^\mu(D(0, t)) \subset I_{1+2\epsilon} (A_\mu(D(0, t)))$$
with probability 1. Using the definition of $D(0, t)$, we may rewrite this as
$$B(0, t(1-2\epsilon)) \subset w^\mu(D(0, t)) \subset B(0, t(1+2\epsilon)),$$
or alternatively as 
$$(w^\mu)^{-1}(B(0, t(1-2\epsilon))) \subset D(0, t) \subset (w^\mu)^{-1}(B(0, t(1+2\epsilon))).$$
It thus follows that for $t \in \left[\frac{R_n}{2}(1+2\epsilon), R_n(1-2\epsilon)\right]$,
\begin{equation}\label{eq: inclusion}
    D\left(0, \frac{t}{1+2\epsilon}\right) \subset (w^\mu)^{-1}(B(0, t)) \subset D\left (0, \frac{t}{1-2\epsilon}\right)
\end{equation}
For simplicity, from now on we take $\epsilon<\frac{1}{20}$ small enough so that the above holds for $t \in [0.6R_n, 0.9R_n]$.

We now turn to bounds on $A(t)$. There exist constants $m$ and $M$ such that for large enough $t$,  $D(0, t)$ completely contains at least $mt^2$ cells of the rectangular lattice in its interior and is contained in the union of $Mt^2$ cells. Recall that we are interested in the function $f=p \circ \phi \circ \wp_0 \circ (w^\mu)^{-1}$. For $t \in [0.6 R_n, 0.9 R_n]$, Expression~\ref{eq: inclusion} implies
$$(w^\mu)^{-1}(B(0, t)) \subset D \left (0, \frac{t}{1-2\epsilon} \right ),$$
from which it follows that the image of $B(0, t)$ by $(w^\mu)^{-1}$ is contained in the union of at most $M(\frac{t}{1-2\epsilon})^2$ cells. Thus the image of $B(0, t)$ by $\wp_0 \circ (w^\mu)^{-1}$ is covered by at most $M(\frac{t}{1-2\epsilon})^2$ hemispheres. Since $\phi$ maps hemispheres to hemispheres and each hemisphere has area $\frac{1}{2}$, it follows that $$A(t) \leq \frac{Mt^2}{2(1-2\epsilon)^2}.$$

We construct an analogous lower bound on $A(t)$. We have $$(w^\mu)^{-1}(B(0, t)) \supset D\left(0, \frac{t}{1+2\epsilon}\right)$$ for $t \in [0.6 R_n, 0.9 R_n]$ for infinitely many $R_n$ with probability 1. It follows that the image of $B(0, t)$ by $(w^\mu)^{-1}$ contains at least $m(\frac{t}{1+2\epsilon})^2$ cells. Thus the image of $B(0, t)$ by $\wp_0 \circ (w^\mu)^{-1}$ and $\phi \circ \wp_0 \circ (w^\mu)^{-1}$ contains at least $m(\frac{t}{1+2\epsilon})^2$ hemispheres. Hence
$$A(t) \geq \frac{mt^2}{2(1+2\epsilon)^2}.$$

In summary, we have that for some constants $c_1$ and $c_2$,
$$c_1t^2 \leq A(t) \leq c_2t^2$$
for $t \in [0.6 R_n, 0.9 R_n]$ for infinitely many $n$ with probability $1$. Integrating, it follows that
$$T_f(R_n)+O(1)=\int_0^{R_n} \frac{A(t)}{t} dt \geq \int_{0.6 R_n}^{0.9 R_n} \frac{A(t)}{t} dt \geq \int_{0.6R_n}^{0.9R_n} c_1t  dt = 0.225 c_1R_n^2.$$
Denoting the order of growth of $f$ by $\lambda$, it follows that
$$\lambda = \limsup_{r \to \infty} \frac{\log T_f(r)}{\log r} \geq \limsup_{n \to \infty} \frac{\log T_f(R_n)}{\log R_n} \geq \limsup_{n \to \infty} \frac{2 \log R_n + O(1)}{\log R_n} = 2.$$
with probability 1.

Similarly, we have that
$$T_f(0.9 R_n)+O(1)=\int_0^{0.9R_n} \frac{A(t)}{t} dt \leq \int_0^{0.9R_n} c_2t dt = 0.405c_2R_n^2,$$
where we used the fact that $A(t)$ is an increasing function. Denoting the lower order by $\underline{\lambda}$, it follows that
$$\underline{\lambda} = \liminf_{r \to \infty} \frac{\log T_f(r)}{\log r}  \leq \liminf_{n \to \infty} \frac{\log T_f(0.9R_n)}{\log 0.9R_n} \leq \liminf_{n \to \infty} \frac{2 \log R_n + O(1)}{\log R_n+O(1)} = 2$$
with probability 1. The proof of Theorem~\ref{order of growth estimate} is complete.

\end{proof}

\section{Directions for Further Research}\label{further discussion}

One of the main limitations of Theorems~\ref{parabolicity} and~\ref{order of growth estimate} are that they only establish results about a subset $\mathcal{V}$ of $\mathcal{S}$. The main challenge in constructing a measure on all of $\mathcal{S}$ is that $\mathcal{S}$ is difficult to parametrize: simply specifying each marked boundary point does not necessarily define a surface in $\mathcal{S}$, since the marked boundary points associated to the vertices of a cell must be in cyclic order.

One idea for parametrizing all of $\mathcal{S}$ is to specify the vertices of each hemisphere up to biholomorphism. More specifically, let $a, b, c, d$ be the marked boundary vertices of a hemisphere in cyclic order, so that there exists a unique automorphism of the hemisphere mapping $b, c, d$ to $1, 0, \infty$, given by
$$z \mapsto \frac{(z-c)(b-d)}{(z-d)(b-c)}.$$
The cross ratio of $a, b, c, d$ is defined as the image of $a$ under this map; it is thus given by the formula
$$(a, b; c, d) = \frac{(a-c)(b-d)}{(a-d)(b-c)}.$$
Since $a, b, c, d$ are in cyclic order, it follows that $(a, b; c, d)$ belongs to the interval $(1, \infty)$. If $H(a, b, c, d)$ denotes the hemisphere with the marked boundary points $a, b, c, d$, it follows that there exists a biholomorphism between $H(a, b, c, d)$ and $H(a', b', c', d')$ mapping corresponding marked boundary points to each other if and only if $(a, b; c, d) = (a', b'; c', d')$. Thus each hemisphere can be specified up to biholomorphism by its cross ratio, which is an arbitrary number from $(1, \infty)$.

The natural question is whether specifying the cross ratio of each cell of a surface $\mathcal{R} \in \mathcal{S}$ determines $\mathcal{R}$ up to biholomorphism. This is phrased more specifically in the following conjecture.

\begin{conjecture}\label{cross ratio conjecture}
    Suppose $\mathcal{R}$ and $\mathcal{R}'$ are two surfaces in $\mathcal{S}$ such that the cell of $\mathcal{R}$ at position $(i, j)$ has cross ratio $k_{i, j}$ and the cell of $\mathcal{R}'$ at position $i, j$ has cross ratio $k_{i, j}'$. Suppose furthermore that $k_{i, j}=k_{i, j}'$ for all $i, j$. Then $\mathcal{R}$ and $\mathcal{R}'$ are of the same conformal type, meaning that $\mathcal{R}$ and $\mathcal{R}'$ are equivalent up to biholomorphism.
\end{conjecture}

Conjecture~\ref{cross ratio conjecture} is non-trivial to prove. Indeed, even though there exist biholomorphisms $\phi_{i, j}$ from the cell of $\mathcal{R}$ at position $(i, j)$ to the cell of $\mathcal{R}'$ at position $(i, j)$, the $\phi_{i, j}$ do not glue into a biholomorphism $\phi: \mathcal{R} \to \mathcal{R}'$ because the $\phi_{i, j}$ do not necessarily agree at the boundary of neighboring cells. A proof of Conjecture~\ref{cross ratio conjecture} would allow one to associate a surface in $\mathcal{S}$ with well defined conformal type to every element of the countably infinite product space $(1, \infty)^\omega$. From here it is easier to define a probability distribution on the set of surfaces with a square grid net, for example by choosing each cross ratio independently and at random with the same probability distribution on each cell. This would open up the possibility of proving that almost every surface in $\mathcal{S}$ is parabolic, without having to restrict to a subspace $\mathcal{V}$.

It is also of interest to know whether the proof of Theorem~\ref{order of growth estimate} can be strengthened to a proof of the following conjecture.

\begin{conjecture}\label{order of growth conjecture}
    The meromorphic function $f$ defined by a surface in $(\mathcal{V}, \eta_\mathcal{V})$ almost always  has order 2.
\end{conjecture}

We have not been able to obtain upper bounds on $\limsup \frac{\log T_f(r)}{\log r}$ or lower bounds on $\liminf \frac{\log T_f(r)}{\log r}$ that hold almost surely. Proving Conjectures~\ref{cross ratio conjecture} and~\ref{order of growth conjecture} is an interesting further direction of research.

\section{Acknowledgements}

I would like to thank my mentor, Professor Sergiy Merenkov, for proposing the project and providing his valuable guidance and feedback. This paper would not have been possible without him. I would also like to express my heartfelt gratitude for the MIT PRIMES program for the wonderful opportunity to conduct research. This was a one-of-a-kind learning experience from which I benefited enormously.

\end{document}